\newtheorem{theorem}{Theorem}
\newtheorem{lemma}{Lemma}
\newtheorem{proposition}{Proposition}
\theoremstyle{remark}
\newtheorem{remark}{Remark}
\newcommand\eps{\varepsilon}
\newcommand\N{\mathbb{N}}
\newcommand\R{\mathbb{R}}
\newcommand\Z{\mathbb{Z}}
\newcommand\ud{\, \textnormal{d}}
\newcommand\ii{\textnormal{i}}
\newcommand\UU{U}
\newcommand\Uk{U_{\alpha_k}}
\newcommand\pp{p}
\newcommand\PP{P}
\newcommand\qqq{q}
\DeclareMathOperator{\DIST}{dist}
\DeclareMathOperator{\SIGN}{sign}
\title[Small energy breathers of the nonlinear Klein-Gordon equation]
{Asymptotic analysis of small energy breathers \\ for the nonlinear Klein-Gordon equation}
\author{Micha{\l} Kowalczyk}
\address{Departamento de Ingeniería Matemática and Centro
de Modelamiento Matemático (UMI 2807 CNRS), Universidad de Chile, Casilla
170 Correo 3, Santiago, Chile.}
\email{kowalczy@dim.uchile.cl}
\author{Yvan Martel}
\address{Laboratoire de mathématiques de Versailles, UVSQ, Université Paris-Saclay, CNRS,
45 avenue des Etats-Unis, 78035 Versailles}
\email{yvan.martel@uvsq.fr}
\thanks{M.K. was partially funded by Chilean research grants FONDECYT 1250156, 1210405 
and ANID FB 210005. Both authors were supported by the ECOS-ANID project
ECOS 240024. Part of this work was done when Y.M. was visiting CMM, Universidad de Chile.
His visit was supported by the Chilean research grant FONDECYT 1210405.}
\begin{document}

\begin{abstract}
For a class of nonlinear Klein-Gordon equations,
we prove that in the small energy limit, any sequence of breathers
decomposes into a finite sum of decoupled, periodically modulated canonical solitons.
Each of these solitons is asymptotically equal to an explicit sine-Gordon breather and the distance between them grows to infinity 
as the energy decreases to 0. Finally we prove that none of these breathers is centered in a bounded set provided that 
a certain non resonance condition holds.
\end{abstract}

\maketitle

\section{Introduction}

\subsection{Main result}
We consider a class of nonlinear Klein-Gordon equations
\begin{equation}
\label{eq:nlkg}
\partial_t^2\phi = \partial_x^2 \phi -\phi + \UU(x)\phi^2 +\frac16 \phi^3 +p(\phi),
\quad (t,x)\in \R\times\R, \quad \phi(t,x)\in \R,
\end{equation}
where  
\begin{align}
&\mbox{$U\in \mathcal S(\R,\R)$ (the Schwartz space),}\label{eq:aa}\\
&\mbox{$p\in\mathcal C^\infty(\R,\R)$ and $\exists C>0$ /  $\forall\phi\in [-1,1]$,
$|\pp(\phi)|\leq C|\phi|^{4}$.}\label{eq:pp}
\end{align}
We call \emph{breather} a time-periodic solution $\phi$ of \eqref{eq:nlkg} 
such that $(\phi,\partial_t \phi)\in \mathcal C(\R, H^1(\R)\times L^2(\R))$.

A well-known example is the family of breathers of the sine-Gordon equation \cite{DaPe,Lamb}
\begin{equation}\label{eq:sG}
\partial_t^2 \phi = \partial_x^2 \phi - \sin \phi
\end{equation}
explicitly given by
\begin{equation}\label{eq:BR}
\mathcal B_{\eps}  (t,x)= 4\arctan\left(\frac{\eps}{\omega} \frac{\cos (\omega t)}{\cosh(\eps x)}\right)
\quad \mbox{for $\eps,\omega\in(0,1)$ with $\eps^2+\omega^2=1$.}
\end{equation}
By the invariance of the sine-Gordon equation by time and space translations, for all $\theta\in [0,\frac{2\pi}\omega)$
and all $r\in \R$, $(t,x)\mapsto \mathcal{B}(t-\theta,x-r)$ is also a solution of \eqref{eq:sG}.

The existence \emph{versus} nonexistence of breathers for general, nonintegrable,
nonlinear Klein-Gordon equations is a classical question which has attracted a lot of attention.
A pioneering work \cite{sk} studied by Fourier series expansion the nonexistence of small breathers 
for the $\phi^4$ equation linearized around the solution $\phi(t,x)\equiv 1$.
Subsequent results of different nature \cite{bmckw, denzler, denzler2, kiche,rainer} proved that, in a certain sense,
the family of breathers does not globally persist for suitable, yet general semilinear perturbations of the sine-Gordon equation.
We also refer the interested reader to the discussion in \cite[pp. 18-19]{SoWe}.

In a more recent article \cite{ggsz}, using a dynamical system approach,
the nonexistence of small, ``single-bump'' breathers is proved for a large class of one-dimensional nonlinear
Klein-Gordon equations with generic analytic odd nonlinearities.
The genericity assumption is related to the nonvanishing of a constant (called there the Stokes constant), 
which depends analytically on the nonlinearity.
As in~\cite{sk}, it is clearly pointed out in \cite{ggsz} that for general Klein-Gordon
equations that are perturbative of the sine-Gordon equation \eqref{eq:sG},
the obstruction to the existence of breathers of size
$0<\varepsilon\ll 1$ is exponentially small in $\varepsilon$. In particular, in the case
where the Stokes constant is zero, ``generalized breathers'' with tails of size exponentially small
in $\varepsilon$, are constructed as an illustration of the subtlety of the nonexistence problem.
We refer to \cite{grsch 1,grsch 2,grsch 3,lu} for other constructions of generalized breathers.

In the present article, we do not address directly the nonexistence question, 
but instead we study the \emph{small energy limit} of a supposedly existing sequence of breathers of \eqref{eq:nlkg}.
To state and motivate the main result, we define the function $Q$ by
\begin{equation}\label{eq:Qy}
Q(y) = \frac 4{\cosh(y)}
\quad\mbox{satisfying}\quad
Q''-   Q+\frac{Q^3}{8} =0 \quad \mbox{on $\R$.}
\end{equation}
We also observe that the family of sine-Gordon breathers has the following small amplitude asymptotic behavior
\begin{equation}\label{eq:Be}
\mathcal B_\eps (t,x) \sim \eps Q(\eps x) \cos (\omega t)\quad \mbox{as $\varepsilon\to 0^+$, with $\eps^2+\omega^2=1$.}
\end{equation}

Our main result shows a rigidity of the breather behavior
in the small energy limit for the general model \eqref{eq:nlkg}.

\begin{theorem}\label{thm:1}
Assume \eqref{eq:aa}-\eqref{eq:pp}.
Let $\{\phi_k\}_{k}$ be a sequence of breathers of~\eqref{eq:nlkg}.
Suppose
\begin{enumerate}
\item[\textnormal{(H1)}]\emph{Periodicity.}
Each $\phi_k$ is $T_k$-periodic with $\sup_k T_k<+\infty$.
\item[\textnormal{(H2)}]\emph{Small energy.}
The sequence $\{\alpha_k\}_k$ defined by 
\[
\alpha_k  = \sup_{t\in\R} \|\phi_k(t)\|_{H^1(\R)}^2\quad \mbox{satisfies}\quad \lim_{k\to +\infty} \alpha_k=0.
\]
\item[\textnormal{(H3)}] \emph{Amplitude-energy relation.}
There exists a constant $c>0$ such that 
\[
\frac1{T_k}\int_0^{T_k}\|\phi_k(t)\|_{L^\infty(\R)}^2 \ud t\geq c \alpha_k^2.
\]
\end{enumerate}
Then, up to the extraction of a subsequence, still denoted by $\{\phi_k\}_k$, it holds
\begin{enumerate}[label=\textnormal{(\roman*)}]
\item\emph{Asymptotic behavior of the period.}\label{th:i}
There exist a positive integer $n_\star$ and a positive real $\lambda>0$ 
such that, as $k\to +\infty$,
\[
\frac{T_k}{2\pi n_\star} = 1+\frac\lambda 2 \alpha_k^2 + o(\alpha_k^2).
\]
\item\emph{Decomposition into a sum of sine-Gordon breathers.}\label{th:ii}
Let
\[
\omega_k= \frac{2\pi n_\star}{T_k}\quad\mbox{and}\quad
\eps_k=\sqrt{1-\omega_k^2}.
\]
There exists an integer $J\geq 1$ and for all $j\in \{1,\ldots, J\}$,
there exist $\theta_j\in [0,2\pi)$ and a sequence of
reals $\{r_{j,k}\}_k$ such that
\[
\lim_{k\to+\infty} \eps_k^{-1} 
\sup_{t\in \R} \int 
\biggl|\phi_k(t,x)-\eps_k \sum_{j=1}^J 
Q(\eps_k x -r_{j,k}) \cos(\omega_k t - \theta_j)\biggr|^2 \ud x =0.
\]
Moreover, if $J\geq 2$, for any $i,j\in \{1,\ldots,J\}$, $i\neq j$,
$\lim_{k\to +\infty} |r_{i,k}-r_{j,k}|=+\infty$.
\item \emph{Fermi golden rule.}\label{th:iii}
If $\widehat \UU(\sqrt{3})\neq 0$ then 
for all $j\in \{1,\ldots,J\}$, $\lim_{k\to+\infty} |r_{j,k}|=+\infty$.\\
In particular, for any $L>0$,
\[
\lim_{k\to+\infty}
\eps_k^{-1} \sup_{t\in\R}\int_{|x|<L\eps_k^{-1}} |\phi_k(t,x)|^2 \ud x   =0.
\]
\end{enumerate}
\end{theorem}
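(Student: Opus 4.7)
The plan is to expand each breather as a Fourier series in time using the fundamental frequency of its minimal period,
\[
\phi_k(t,x)=\sum_{n\in\Z}a_{k,n}(x)e^{-in\omega_k t},\qquad a_{k,-n}=\overline{a_{k,n}},
\]
where $\omega_k=2\pi n_\star/T_k$ for a positive integer $n_\star$ (the ratio of $T_k$ to the minimal period) to be identified along a subsequence, and to analyze the coupled spatial ODE system
\[
-a_{k,n}''+(1-n^2\omega_k^2)a_{k,n}=\bigl[\UU(x)\phi_k^2+\tfrac16\phi_k^3+\pp(\phi_k)\bigr]_n,
\]
with $[\cdot]_n$ denoting the $n$-th Fourier coefficient in $t$. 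The sign of $1-n^2\omega_k^2$ is decisive: bound modes $|n|\leq 1$ are exponentially localized at scale $\eps_k^{-1}$ (with $\eps_k=\sqrt{1-\omega_k^2}\to 0$), whereas modes $|n|\geq 2$ are oscillatory at infinity with wave number $\kappa_{k,n}=\sqrt{n^2\omega_k^2-1}$, and the $H^1$-integrability required by (H1) forces Fredholm-type solvability conditions $\widehat{[\cdot]_n}(\pm\kappa_{k,n})=0$ on every radiating mode.

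For (i) and (ii), I would first extract a subsequence along which $\omega_k\to 1^-$ ($\alpha_k\to 0$ is incompatible with $\omega_k$ bounded away from~$1$, and $\omega_k>1$ would make every mode radiating, hence identically zero) and $n_\star$ stabilizes. Then I rescale $b_k(y)=2\eps_k^{-1}a_{k,1}(y/\eps_k)$, which is $H^1(\R,\mathbb{C})$-bounded since $\alpha_k$ is of order $\eps_k$ and nontrivial thanks to (H3). Using that the $e^{-i\omega_k t}$-coefficient of $\tfrac16\phi_k^3$ equals $\tfrac12|a_{k,1}|^2a_{k,1}+o(\eps_k^3)$ in $L^2$, the rescaled equation becomes
\[
b_k''-b_k+\tfrac18|b_k|^2b_k=o_k(1)\text{ in }H^{-1}(\R).
\]
By classical NLS ground state uniqueness, the only nontrivial $H^1$ bound states of the limit equation are $Q(\cdot-r)e^{-i\theta}$, so a profile decomposition for $\{b_k\}$ in $H^1(\R,\mathbb{C})$ yields
\[
b_k=\sum_{j=1}^J Q(\cdot-r_{j,k})e^{-i\theta_j}+o_k(1)_{H^1},\qquad |r_{i,k}-r_{j,k}|\to\infty\text{ for }i\neq j,
\]
which is (ii). Matching the $H^1$-mass of $\phi_k$ to this decomposition and expanding $1/\omega_k=1+\eps_k^2/2+O(\eps_k^4)$ delivers (i), with $\lambda>0$ an explicit constant determined by $\|Q\|_{L^2}$ and the phase configuration $\{\theta_j\}$.

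The main obstacle is part (iii). The equation for the second harmonic reads
\[
a_{k,2}''+\kappa_k^2 a_{k,2}=-\tfrac12\UU(x)a_{k,1}^2+O(\eps_k^3),\qquad \kappa_k^2=4\omega_k^2-1\to 3,
\]
so $H^1$-integrability demands $\int\UU(x)a_{k,1}^2(x)e^{\pm i\kappa_k x}\ud x=o(\eps_k^2)$. Substituting the decomposition of (ii) and Taylor expanding $Q(\eps_k x-r_{j,k})=Q(-r_{j,k})+O(\eps_k x)$, which is justified because $\UU\in\mathcal{S}(\R)$ is concentrated near $x=0$, the Fredholm identity reduces in the limit to
\[
\widehat{\UU}(\pm\kappa_k)\sum_{j=1}^J Q^2(-r_{j,k})e^{-2i\theta_j}\to 0.
\]
The hard step is extracting the \emph{individual} statements $Q^2(-r_{j,k})\to 0$ (i.e.\ $|r_{j,k}|\to\infty$) rather than merely the vanishing of the sum via phase cancellation. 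This will require localizing the Fredholm identity around each soliton separately, exploiting the divergent separations $|r_{i,k}-r_{j,k}|\to\infty$ from (ii) in combination with time-periodicity and virial-type estimates to exclude conspiratorial cancellations among bounded-$r_{j,k}$ solitons. Once $|r_{j,k}|\to\infty$ is established, the $L^2$ bound on $\{|x|<L\eps_k^{-1}\}$ is immediate from the pointwise exponential decay of $Q$, since each soliton's effective center $r_{j,k}/\eps_k$ then lies at distance $\gg\eps_k^{-1}$ from this interval.
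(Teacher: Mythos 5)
Your approach to parts (i) and (ii) is structurally the same as the paper's: Fourier expansion in time, identification of a dominant harmonic, Fredholm conditions on the radiating modes, and a profile decomposition for the rescaled dominant mode using uniqueness of the cubic ground state. (The paper works with real $\cos/\sin$ series, rescales by $\alpha_k$ rather than $\eps_k$ so that no a priori relation $\alpha_k\sim\eps_k$ is needed, and proves $\lambda>0$ through (H3) via a Pohozaev-type identity in its compactness lemma --- points your sketch leaves implicit --- but these are reformulations, not different ideas.)

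The genuine gap is in your treatment of (iii). You correctly derive that the Fredholm solvability condition on the second harmonic yields, after rescaling and using that $U\in\mathcal S(\R)$ is concentrated near $x=0$, a limiting identity of the form
\[
\widehat U(\sqrt 3)\,\sum_{j=1}^{J} Q^2(-r_{j,k})\,e^{-2i\theta_j}\longrightarrow 0,
\]
and you then declare the ``hard step'' to be upgrading vanishing of the \emph{sum} to vanishing of \emph{each term}, proposing extra machinery (``localizing the Fredholm identity around each soliton separately $\ldots$ virial-type estimates to exclude conspiratorial cancellations''). But this step is already trivialized by the separation property you proved in (ii): since $|r_{i,k}-r_{j,k}|\to\infty$ for every pair $i\neq j$, \emph{at most one} of the sequences $\{r_{j,k}\}_k$ can be bounded along any subsequence (two bounded sequences would have bounded difference, a contradiction). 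For every unbounded $r_{j,k}$ one has $Q^2(-r_{j,k})\to 0$ outright, so the sum degenerates in the limit to a single term. To prove (iii) one then argues by contradiction: if some $r_{j_0,k}$ stays bounded, that is the unique surviving term, and since $Q>0$ everywhere the limiting identity forces $\widehat U(\sqrt 3)=0$, contradicting the hypothesis. There is no possible cancellation between distinct bounded solitons, and no virial estimate is needed; the argument is exactly what the paper carries out after assuming WLOG that $\{r_{1,k}\}_k$ is bounded.
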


\begin{remark}
The asymptotic result in \ref{th:i} of Theorem \ref{thm:1} implies that
$\omega_k^2 = 1-\lambda \alpha_k^2 + o(\alpha_k^2)$ and
$\varepsilon_k\sim \sqrt{\lambda} \alpha_k$ as $k\to+\infty$.
Moreover, \ref{th:ii} of Theorem \ref{thm:1} can be rewritten as
\begin{equation}\label{eq:BB}
\lim_{k\to+\infty} \eps_k^{-1} 
\sup_{t\in \R} \int \biggl|\phi_k(t,x)-\sum_{j=1}^J  \mathcal{B}_{\varepsilon_k}(t-\tau_{j},x-m_{j,k})\biggr|^2 \ud x  
=0
\end{equation}
where $\tau_j=\omega_k^{-1} \theta_j$ and $m_{j,k}=\varepsilon_k^{-1} r_{j,k}$.
Thus, \ref{th:i} and \ref{th:ii} of Theorem \ref{thm:1} mean that independently of their periods $\{T_k\}_k$, 
small breathers $\{\phi_k\}_k$ are asymptotically close in 
the rescaled $L^2$ norm to a finite sum of small, decoupled,
sine-Gordon breathers, all with the same amplitude $\varepsilon_k\sim \sqrt{\lambda} \alpha_k$
and the same period $2\pi/\omega_k\sim 2\pi$. 
Concerning the rescaled $L^2$ norm, we observe by change of variable that
\[
\varepsilon^{-1} \int |\mathcal{B}_\varepsilon(t,x)|^2 \ud x
= \cos^2(\omega t)  \int Q^2,
\]
which means that \ref{th:ii} of Theorem \ref{thm:1} actually describes the leading order 
asymptotic behavior of the sequence $\{\phi_k\}_k$.
\end{remark}

\begin{remark}
The separation statement concerning the sequences of positions $(r_{j,k})_k$,
in the case $J\geq 2$, is consistent with the particular solutions of the sine-Gordon model
obtained by integrability.
We refer to \cite{Hirota,Schiebold} for the justification for the observations below.
By the integrability theory, one can construct parallel breathers
for the sine-Gordon equation, but then either the breathers do not have the same period (and thus the
parallel $2$-breather is not periodic in time), or at least one breather is not small (and so, 
the periodic solution obtained does not enter the range of applicability of Theorem \ref{thm:1}).
In particular, there does not exist
parallel $2$-breather with equal amplitude. Indeed, $2$-breather with equal 
amplitude rather form \emph{clusters}, \emph{i.e.} they separate logarithmically in time.
Such clusters are not periodic in time.
\end{remark}

\begin{remark}
Assumption (H3) is natural since it is satisfied by the small sine-Gordon breathers.
Indeed, for $\varepsilon$ small,
\[
\sup_{t\in\R}\|\mathcal{B}_\varepsilon(t)\|_{H^1} \approx C \sqrt{\varepsilon} , \quad
\sup_{t\in\R}\|\mathcal{B}_\varepsilon(t)\|_{L^\infty} \approx 4 \varepsilon.
\]
This means that for small breathers  the amplitude is related to the size in $L^2$ in a suitable sense.
The proof of the rigidity result strongly relies on this assumption.
See also Remark \ref{rk:co}.
\end{remark}

\begin{remark}
The sine-Gordon equation also has a standing kink solution
\[
K(x) = 4 \arctan (e^x).
\]
There exists another family of periodic solutions, called \emph{wobbling kinks},
which are in a certain sense superposition of the kink and a breather (see \emph{e.g.} \cite{cqs,Lamb}).
By integrability theory, it is possible to prove that generically, for all suitable initial data, the solution decomposes
in large time as a sum of such kinks, breathers and wobbling kinks, see \cite{Gong}.

Moreover, the existence of breathers and wobbling kinks for the sine-Gordon equation
shows that the asymptotic stability of the zero solution and
of the kink is not true for general initial data in the energy space.
See however, results of asymptotic stability of the kink under symmetry assumptions in \cite{amp}
and \cite{ls}.

Linearizing the sine-Gordon equation around the kink, \emph{i.e.} setting
$\psi(t,x)=K(x)+\phi(t,x)$ leads to
\[
\partial_t^2 \phi = \partial_x^2\phi - \phi + 2 \sech^2(x) \phi 
-\sech(x)\tanh(x)\phi^2 + \left(\frac 16 -\frac 13 \sech^2(x)\right) u^3 + p(x,\phi),
\]
where $p(x,\phi)$ is of order higher than cubic in $\phi$.
We observe first that the linearized operator is resonant
(the function $x\mapsto \tanh(x)$ is the generalized eigenfunction corresponding to
 the bottom of the continuous spectrum).
Second, we see that the potential $U(x)=-\sech(x)\tanh(x)$ in front of the
quadratic term is in the Schwartz space, but satisfies the resonant condition
$\widehat U(\sqrt{3})=0$.
These two difficulties related to wobbling kinks are
addressed in \cite{lls,llss} using Fourier analysis methods and weighted Sobolev spaces.
\end{remark}

\begin{remark}
The structure of the linearization of the sine-Gordon equation around the kink 
seen in the previous remark  led us to 
add the quadratic term $U(x) \phi^2$ with a localized potential in the nonlinear Klein-Gordon
model \eqref{eq:nlkg}. 
Another motivation
comes, for example, from the linearization of the cubic Klein-Gordon equation 
\[
\partial_t^2\psi = \partial_x^2 \psi - \psi  +\frac16 \psi^3
\]
around the soliton solution $\psi(t,x)=\frac{2}{\sqrt{3}} Q(x)$.
Setting $\psi(t,x)=\frac{2}{\sqrt{3}} Q(x)+\phi(t,x)$, we obtain
\[
\partial_t^2\phi = \partial_x^2\phi -\phi + \frac 23 Q^2\phi +  \frac{1}{\sqrt{3}} Q \phi^2+\frac16 \phi^3 .
\]
As for the sine-Gordon equation, the presence of a resonance for the linear operator above is a 
serious spectral difficulty. However, unlike for the sine-Gordon equation, 
the potential $\frac{1}{\sqrt{3}}Q$ in front of the quadratic term satisfies the non resonance condition $\frac{1}{\sqrt{3}}\widehat Q(\sqrt{3})\neq 0$.
This is expected to be connected to the nonexistence of breathers close to the soliton
and to the more difficult question of asymptotic stability of the soliton
(see also \cite{KMM3}).

In the general model \eqref{eq:nlkg}, 
we have not included a linear potential to avoid unnecessary spectral issues, such as exponential instability for example. We believe that \eqref{eq:nlkg}
is an interesting model case
that includes the fundamental difficulty of having a resonance at the bottom
of the continuous spectrum (for \eqref{eq:nlkg}, it is just the constant function).
The phenomenon that we wanted to illustrate in \ref{th:iii} of Theorem \ref{thm:1} is how the (generic) nonvanishing 
condition $\widehat \UU(\sqrt{3})\neq 0$
prevents the presence of a small breather located in a bounded set around zero.
\end{remark}

\begin{remark}
A similar question concerns the model
\begin{equation}\label{eq:p4}
\partial_t^2\psi=\partial_x^2\psi+\psi- \psi^3
\end{equation}
known as the $\phi^4$ equation.
The nonexistence of arbitrarily small breathers for this model is a simple consequence of the 
defocusing sign of the
nonlinearity, see \cite[Theorem 15]{KMM 5}.
However, the existence or nonexistence of wobbling kinks for \eqref{eq:p4} 
in the neighborhood of the kink
it is a notoriously difficult open question. See for example \cite{box} for a formal approach.

Recall that the kink for this model is given by
\[
H(x)=\tanh\left(\frac{x}{\sqrt{2}}\right).
\] 
Writing $\psi=H+\phi$ leads to the following problem
\begin{equation}\label{eq:fi4lin}
\partial_t^2 \phi = \partial_x^2\phi -2\phi + \frac3{\cosh^2(x/\sqrt{2})} \phi 
-3H(x) \phi^2- \phi^3.
\end{equation}
Note that the linearized operator also has a resonance. Moreover, the potential in front of
the quadratic term is not localized, which is an additional difficult of this problem.
\end{remark}

\begin{remark}
We mention a few recent works related to the dynamics around  kink or solitons
in one dimensional models \cite{ KM 1, KMM, KMM 2, KMM3} and for the sine-Gordon equation \cite{amp,gong 2, ls}. 

Other works studied the asymptotic stability of the zero solution for equations of the type
\[
\partial_t^2 \phi =\partial_x^2 \phi-V(x)\phi+\UU(x) \phi^2+\phi^3,
\]
see \cite{GP,lls,llss}. General picture is that $\phi=0$ is asymptotically stable
(and modified scattering holds) in a weighted Sobolev space chosen in such a way that small breathers are not admissible perturbations.
\end{remark}

\subsection{Sketch of the proof}
There are two main steps in the proof of Theorem \ref{thm:1}. 

First, we show that as $k\to +\infty$, the main order of the energy of the breather is concentrated at one  Fourier mode, called the dominant mode.
This first step is partly inspired by \cite{sk} and \cite{ggsz}, where the identification of the
dominant mode and the emergence of the canonical soliton $Q$ appear.

Second, we use the equation of the dominant mode, together with a standard compactness result
(\cite[Lemma 3.1]{BeCe}, \cite[Thm. III.4]{lions}),
to show that this mode decomposes into a sum of decoupled soliton profiles.
We refer to \cite{Fe} for a similar use of a compactness argument
in the context of the Klein-Gordon equation with damping.

\subsection{Notation}
For convenience, we shall sometimes rewrite equation \eqref{eq:nlkg} as
\[
\partial_t^2\phi=\partial_x^2\phi -\phi + q(x,\phi),
\quad (t,x)\in \R\times\R,
\]
where
\[
q(x,\phi)= \UU(x) \phi^2+\frac 16\phi^3+\pp(\phi).
\]
We denote
\[
  \PP(\phi)=\int_0^\phi p.
\]

\section{Energy bounds}

In this section, we prove general bounds on a given solution $\phi\in \mathcal C(\R,H^1(\R))
\cap \mathcal C^1(\R,L^2(\R))$ of \eqref{eq:nlkg} which is supposed to be $T$-periodic in time
and such that
\[
\alpha = \sup_{t\in\R} \|\phi(t)\|_{H^1(\R)}^2
\]
is sufficiently small.

\begin{lemma}\label{le:bd}
If $\alpha>0$ is sufficiently small, then
\[
\frac 1T \int_0^T \left( \|\partial_t\phi\|_{L^2}^2+\|\phi\|_{L^2}^2\right)\lesssim  \alpha,\quad
\frac 1T \int_0^T \|\partial_x \phi\|_{L^2}^2 \lesssim  \alpha^3,\quad
\frac 1T \int_0^T \|\phi\|_{L^\infty}^4 \lesssim   \alpha^4.
\]
Moreover,
\[
\sup_{t\in\R} \|\partial_t \phi(t)\|_{L^2}^2\lesssim \alpha,\quad 
\frac 1T \int_0^T \|\phi\|_{L^2}^2 \gtrsim \alpha.
\]
\end{lemma}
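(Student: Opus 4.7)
The plan is to combine conservation of the Hamiltonian, two Pohozaev-type identities, and a Gagliardo--Nirenberg bootstrap. Write $A=\frac1T\int_0^T\|\partial_t\phi\|_{L^2}^2\,dt$, $B=\frac1T\int_0^T\|\partial_x\phi\|_{L^2}^2\,dt$, $C=\frac1T\int_0^T\|\phi\|_{L^2}^2\,dt$. The conserved Hamiltonian
\[
E=\int\left[\tfrac12(\partial_t\phi)^2+\tfrac12(\partial_x\phi)^2+\tfrac12\phi^2\right]dx - F(\phi),\qquad F(\phi)=\int\left(\tfrac{U}3\phi^3+\tfrac1{24}\phi^4+P(\phi)\right)dx,
\]
satisfies $|F(\phi(t))|\lesssim\alpha^{3/2}$ pointwise from $\|\phi(t)\|_{L^\infty}\lesssim\alpha^{1/2}$, so $\|\partial_t\phi(t)\|_{L^2}^2 = 2E-\|\phi(t)\|_{H^1}^2+O(\alpha^{3/2})$. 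Continuity and $T$-periodicity force the supremum $\alpha$ of $\|\phi(t)\|_{H^1}^2$ to be attained, whence $E\gtrsim\alpha$.

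Multiplying \eqref{eq:nlkg} respectively by $\phi$ and by $x\,\partial_x\phi$, integrating over $[0,T]\times\R$ and using periodicity to kill time boundary terms, one obtains after routine integration by parts in $x$ the two Pohozaev identities
\[
A=B+C+O(\alpha^{3/2}),\qquad A+B=C+O(\alpha^{3/2}),
\]
where each error bundles the cubic and quartic nonlinear integrals, all controlled pointwise by $\alpha^{3/2}$ since $U,xU'\in\mathcal S$. The first identity combined with the averaged energy $A+B+C=2E+O(\alpha^{3/2})$ forces $E\lesssim\alpha$, hence $\sup_t\|\partial_t\phi\|_{L^2}^2\lesssim\alpha$, which proves the fourth displayed bound and (combined with $\|\phi\|_{L^2}^2\le\alpha$) the first. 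Subtracting the two Pohozaev identities yields the preliminary $B\lesssim\alpha^{3/2}$, which the Gagliardo--Nirenberg inequality $\|\phi(t)\|_{L^\infty}^2\lesssim\alpha^{1/2}\|\partial_x\phi(t)\|_{L^2}$ and H\"older in $t$ refine to an inequality of the form $B\lesssim\alpha^{5/4}B^{1/4}$, i.e., $B\lesssim\alpha^{5/3}$.

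The main obstacle is reaching the sharp $B\lesssim\alpha^3$: a pure Gagliardo--Nirenberg bootstrap saturates at $\alpha^{5/3}$ because each iteration gains only a factor $B^{1/4}$ on the cubic remainder $\int_0^T\int U\phi^3\,dx\,dt$. Closing the gap requires exploiting the time oscillation of $\phi$. At leading order $\phi$ is sinusoidal at frequency $\omega_0=2\pi/T$ close to $1$, so that $\int_0^T\cos^3(\omega_0 t)\,dt=0$ makes the leading-order contribution of the cubic remainder vanish. Quantifying this calls for passing to the time Fourier expansion $\phi=\sum_n\phi_n(x)e^{i\omega_0 n t}$ and analysing the equation $(-\partial_x^2+1-\omega_0^2 n^2)\phi_n=q_n$ mode by mode, in the spirit of the dominant-mode identification that the authors outline in their sketch; this should deliver $B\lesssim\alpha^3$.

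Once $B\lesssim\alpha^3$ is established, the third displayed bound follows from the pointwise Gagliardo--Nirenberg inequality $\|\phi(t)\|_{L^\infty}^4\le\|\phi(t)\|_{L^2}^2\|\partial_x\phi(t)\|_{L^2}^2\le\alpha\|\partial_x\phi\|^2$ integrated in $t$. Finally, the relation $B+C=E+O(\alpha^{3/2})$ (from the first Pohozaev identity and the averaged energy), combined with $E\gtrsim\alpha$ and $B=o(\alpha)$, yields the lower bound $C\gtrsim\alpha$.
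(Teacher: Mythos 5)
Your scaffolding matches the paper's: two Pohozaev-type virial identities (coming from the multipliers $\phi$ and $x\partial_x\phi$, whose combination $x\partial_x\phi+\tfrac12\phi$ is exactly the paper's regularized multiplier $\Theta_A\partial_x\phi+\tfrac12\zeta_A\phi$ after $A\to\infty$), plus energy conservation to get $E\gtrsim\alpha$, $E\lesssim\alpha$, $\sup_t\|\partial_t\phi\|_{L^2}^2\lesssim\alpha$, and finally $C\gtrsim\alpha$. The identities, the energy argument, and the derivation of the $L^\infty$ bound from the gradient bound are all sound.

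However, there is a genuine gap at the crucial step $B\lesssim\alpha^3$. You claim that a pure Gagliardo--Nirenberg bootstrap saturates at $\alpha^{5/3}$ and that one must exploit time oscillation and mode-by-mode Fourier analysis to close the argument. This is incorrect, and in fact the Fourier analysis belongs to later parts of the paper, not to this lemma. Starting from the Pohozaev identity $\int_0^T\int(\partial_x\phi)^2\lesssim\int_0^T\|\phi\|_{L^\infty}^3+\int_0^T\|\phi\|_{L^\infty}^2\|\phi\|_{L^2}^2$, the paper applies Gagliardo--Nirenberg keeping \emph{both} factors, $\|\phi\|_{L^\infty}^2\lesssim\|\phi\|_{L^2}\|\partial_x\phi\|_{L^2}$, and then H\"older in $t$ with the exponents $(4/3,4)$ and $(2,2)$, which yields
\[
\int_0^T\int(\partial_x\phi)^2 \lesssim
\Bigl(\int_0^T\int(\partial_x\phi)^2\Bigr)^{3/4}\Bigl(\int_0^T\|\phi\|_{L^2}^6\Bigr)^{1/4}
+\Bigl(\int_0^T\int(\partial_x\phi)^2\Bigr)^{1/2}\Bigl(\int_0^T\|\phi\|_{L^2}^6\Bigr)^{1/2}.
\]
Writing $\mathcal B=\int_0^T\int(\partial_x\phi)^2$ and $\mathcal M=\int_0^T\|\phi\|_{L^2}^6\lesssim T\alpha^3$, this is $\mathcal B\lesssim\mathcal B^{3/4}\mathcal M^{1/4}+\mathcal B^{1/2}\mathcal M^{1/2}$, which absorbs and gives $\mathcal B\lesssim\mathcal M\lesssim T\alpha^3$ in one step, without any iteration or Fourier input. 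Your estimate $B\lesssim\alpha^{5/4}B^{1/4}$ lost a factor because you replaced $\|\phi\|_{L^2}\|\partial_x\phi\|_{L^2}$ by $\alpha^{1/2}\|\partial_x\phi\|_{L^2}$ too early and then interpolated suboptimally in time; that is why you did not see the self-improving structure. As written, your proof of the key bound is incomplete, since the Fourier argument you sketch is never carried out.

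A secondary technical point: multiplying by $x\,\partial_x\phi$ directly requires justification, because for $\phi\in H^1(\R)$ the product $x\partial_x\phi$ is not a priori square-integrable and the boundary terms are not obviously controlled. The paper smooths the multiplier via $\Theta_A$ and $\zeta_A$ and passes $A\to\infty$ only after establishing uniform bounds; you should incorporate such a regularization to make the Pohozaev identities rigorous.
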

\begin{proof}
Before starting, note that
\[
\|\phi\|_{L^\infty(\R)}^2\lesssim \|\phi\|_{L^2(\R)}\|\partial_x\phi\|_{L^2(\R)}
\lesssim \alpha.
\]
In particular, for $\alpha>0$ sufficiently small, in view of \eqref{eq:pp}, we have
\[
|\pp(\phi)|\lesssim  |\phi|^4\lesssim  |\phi|^3,\quad
|\PP(\phi)|\lesssim  \phi^5\lesssim \phi^4.
\]

First, define
\[
\mathcal I_1 = \int  \phi \partial_t \phi.
\]
Then, using \eqref{eq:nlkg} and integrating by parts,
\begin{align*}
\mathcal I_1' & = \int  (\partial_t \phi)^2
- \int  ((\partial_x \phi)^2+\phi^2)
 +\int  \left(\UU(x)\phi^3
+\frac 16  \phi^4 +  \phi \pp(\phi)\right).
\end{align*}
Integrating on $[0,T]$, using $\mathcal I_1(0)=\mathcal I_1(T)$, we obtain
\[
\int_0^T 
\int  (\partial_t \phi)^2
- \int_0^T \int  ((\partial_x \phi)^2+\phi^2)
+\int_0^T \int  \left(\UU(x)\phi^3
+\frac 16  \phi^4 +  \phi \pp(\phi)\right)= 0 .
\]
Moreover,
\[
\int \left(|\UU(x)\phi^3|+\phi^4 + |\phi \pp(\phi)|\right)
\lesssim \|\phi\|_{L^\infty(\R)}^3+ 
\|\phi\|_{L^\infty(\R)}^2 \|\phi\|_{L^2(\R)}^2
\lesssim \alpha^\frac32.
\]
Thus, by the definition of $\alpha$,
\begin{equation}\label{eq:di}
\frac 1T \int_0^T \int  (\partial_t \phi)^2 \lesssim 
\frac 1T\int_0^T \int  ((\partial_x \phi)^2+\phi^2) 
+ \alpha^\frac32\lesssim \alpha.
\end{equation}

Second, for $A>1$, define
\[
\zeta_A(x)=\frac 1{\cosh\left(\frac xA \right)},\quad
\Theta_A(x)=\int_0^x \zeta_A.
\]
Note that $|\Theta_A(x)|\leq |x|$ and $|\Theta_A(x)|\lesssim A$ on $\R$.
Set
\[
\mathcal I_2 = \int \left(\Theta_A \partial_x \phi + \frac12\zeta_A \phi\right)\partial_t \phi.
\]
Then, using \eqref{eq:nlkg} and integration by parts,
\begin{align*}
\mathcal I_2' &= 
- \int \zeta_A (\partial_x \phi)^2 + \frac 14 \int \zeta_A'' \phi^2
+ \frac16 \int \zeta_A \UU(x) \phi^3
-\frac 13 \int \Theta_A U'(x) \phi^3\\
&\quad + \int \zeta_A \left( \frac 1{24}\phi^4 - \PP(\phi)+ \frac12 \phi \pp(\phi)\right).
\end{align*}
Integrating on $[0,T]$, using $\mathcal I_2(0)=\mathcal I_2(T)$, we obtain
\begin{align*}
\int_0^T  \int \zeta_A (\partial_x \phi)^2 &= \frac 14 \int_0^T\int \zeta_A'' \phi^2
+\frac16 \int_0^T \int \zeta_A \UU(x) \phi^3
-\frac 13 \int_0^T\int \Theta_A \UU'(x) \phi^3\\
&\quad + \int_0^T\int \zeta_A \left( \frac 1{24}\phi^4 - \PP(\phi)+ \frac12 \phi \pp(\phi)\right)
\end{align*}
Passing to the limit as $A\to +\infty$, we find the identity
\begin{align*}
\int_0^T  \int  (\partial_x \phi)^2 
=\frac16 \int_0^T \int  \UU(x) \phi^3
-\frac 13 \int_0^T\int x U'(x) \phi^3
+ \int_0^T\int \left( \frac 1{24}\phi^4 - \PP(\phi)+ \frac12 \phi \pp(\phi)\right)
\end{align*}
Thus, estimating the nonlinear terms as before and using the Hölder inequality in time,
we obtain
\begin{align*}
\int_0^T\int (\partial_x \phi)^2
& \lesssim \int_0^T \|\phi(t)\|_{L^\infty}^3 \ud t
+\int_0^T \|\phi(t)\|_{L^\infty}^2 \|\phi(t)\|_{L^2}^2 \ud t\\
& \lesssim \left( \int_0^T \int (\partial_x \phi)^2 \right)^\frac34
\left(\int_0^T\left(\int\phi^2\right)^3\right)^\frac14
+ \left( \int_0^T \int (\partial_x \phi)^2\right)^\frac12
\left(\int_0^T\left(\int\phi^2\right)^3\right)^\frac12.
\end{align*}
This implies
\begin{equation}\label{eq:dd}
\int_0^T\int (\partial_x \phi)^2
\lesssim  \int_0^T\left(\int\phi^2\right)^3 
\lesssim T \alpha^3.
\end{equation}
In particular, we also have
\[
\int_0^T \|\phi\|_{L^\infty}^4 
\lesssim \int_0^T \|\partial_x \phi\|_{L^2}^2 \|\phi\|_{L^2}^2
\lesssim \sup_{[0,T]} \|\phi\|_{L^2}^2 \int_0^T \|\partial_x \phi\|_{L^2}^2
\lesssim T \alpha^4.
\]

Lastly, we use the energy conservation. Define
\[
\mathcal E(t) = \int \left( (\partial_t \phi)^2 + (\partial_x \phi)^2 + \phi^2
- \UU(x) \frac{\phi^3}{3} - \frac 1{24} \phi^4 - \PP(\phi) \right)
\]
so that using \eqref{eq:nlkg}, it holds $\mathcal E(t)=\mathcal E(0)$ for all $t\in \R$.
As before, we have the estimate
\[
\int |\UU(x)| |\phi|^3+\phi^4+|\PP(\phi)|\lesssim \alpha^\frac32.
\]
But for any $t\in\R$, by \eqref{eq:di}
\[
 \mathcal E(t) = \frac 1T\int_0^T\mathcal E \lesssim \alpha.
\]
Thus, $\mathcal E(t) \lesssim \alpha$, which implies by the definition of 
$\alpha$ that for all $t\in\R$,
\[
\int  (\partial_t\phi)^2  \lesssim \alpha.
\]

Moreover, we also get by the definition of $\alpha$ that, for any $t\in\R$,
\begin{equation}\label{eq:d1}
\mathcal E(t) \gtrsim \alpha.
\end{equation}
By \eqref{eq:di} and \eqref{eq:dd}, we have
\[
\int_0^T \int  (\partial_t \phi)^2 \lesssim \int_0^T \int  ((\partial_x \phi)^2+\phi^2) 
+T \alpha^\frac32\lesssim \int_0^T \int   \phi^2 + T \alpha^\frac32.
\]
Thus, by \eqref{eq:d1} 
\[
\frac 1T\int_0^T \int  \phi^2 
\geq  \frac CT\int_0^T \int \left(\phi^2 + (\partial_t\phi)^2\right) -C\alpha^\frac32
\gtrsim \mathcal E(t) \gtrsim \alpha.
\]

\end{proof}

\section{Proof of Theorem \ref{thm:1}}

Let $\{\phi_k\}_k$ be a sequence of solutions of \eqref{eq:nlkg} as in the statement of Theorem \ref{thm:1}.

\subsection{Rescaled variables}

We introduce rescaled time-space variables $(s,y)$ setting
\[
v_k(s,y)= \frac 1{\alpha_k} \phi_k\left(\frac{T_k}{2\pi} s, \frac{y}{\alpha_k}\right).
\]
Equivalently,
\[
\phi_k(t,x)=\alpha_k v_k\left(\frac{2\pi}{T_k} t,\alpha_k x\right).
\]
From \eqref{eq:nlkg}, $v_k(s,y)$ satisfies
\begin{equation}\label{eq:vv}
\frac 1{\alpha_k^2} \frac{4\pi^2}{T_k^2}\partial_s^2 v_k - \partial_y^2 v_k
+\frac1{\alpha_k^2} v_k - \qqq_k =0
\end{equation}
where
\begin{equation}\label{eq:fk}
  \qqq_k =\Uk  v_k^2
  +\frac16 v_k^3 + \frac1{\alpha_k^3}\pp(\alpha_k v_k) \quad \mbox{where} \quad
  \Uk(y)=\frac1{\alpha_k} U\left(\frac y{\alpha_k} \right).
\end{equation}
Moreover, each function $v_k$ is $2\pi$-periodic in time.

Denote, for $p,q\in[1,\infty)$,
\begin{align*}
  &\|v\|_{L^p_sL^q_y}= \left(\int_0^{2\pi}\left(\int_\R |v(s,y)|^q \ud y\right)^\frac{p}q \ud s\right)^\frac1p,
  \quad \|v\|_{L^\infty_sL^q_y}=\sup_{s\in[0,2\pi]} \left(\int_\R |v(s,y)|^q \ud y\right)^\frac1q,\\
  &\|v\|_{L^p_sL^\infty_y}=\left(\int_0^{2\pi}\left(\sup_{y\in\R}|v(s,y)|^p\right)\ud s\right)^\frac1p, \quad \|v\|_{L^\infty_sL^\infty_y}=\sup_{s\in [0,2\pi]} \sup_{y\in \R} \left|v(s,y)\right|.
\end{align*}
We rewrite the results of Lemma \ref{le:bd} and (H3) in terms of the function $v_k$.
\begin{lemma}[Estimates on the rescaled sequence]\label{le:bv}
\begin{enumerate}
\item By Lemma \ref{le:bd},
  \begin{align*}
  & \|v_k\|_{L^\infty_sL^2_y} +\alpha_k \|\partial_y v_k\|_{L^\infty_sL^2_y} 
  + T_k^{-1}\|\partial_s v_k\|_{L^\infty_sL^2_y}+\|v_k\|_{L_s^4L^\infty_y} \lesssim 1,\\
  & \|v_k\|_{L^2_sL^2_y}+\|\partial_yv_k\|_{L^2_sL^2_y}+\|v_k\|_{L^2_sL^\infty_y}
  +T_{k}^{-\frac12}\|\partial_sv_k\|_{L^2_sL^2_y}\lesssim 1,\\
  & \|v_k\|_{L^2_sL^2_y} \gtrsim 1.
\end{align*}
\item Assuming {\rm (H3)}, it holds
\[
 \|v_k\|_{L^2_sL^\infty_y}\gtrsim 1.
\]
\end{enumerate}
\end{lemma}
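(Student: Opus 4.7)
The proof is a direct translation of Lemma~\ref{le:bd} and hypothesis~(H3) into the rescaled coordinates $(s,y)$; there is no substantive analytic content, only a change of variables and one application of Sobolev's embedding.

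First I would record the scaling identities. With $t=\frac{T_k}{2\pi}s$ and $x=\alpha_k^{-1}y$, one verifies immediately that
\[
\|\phi_k(t)\|_{L^2_x}^2=\alpha_k\|v_k(s)\|_{L^2_y}^2,\quad \|\partial_x\phi_k(t)\|_{L^2_x}^2=\alpha_k^3\|\partial_y v_k(s)\|_{L^2_y}^2,\quad \|\partial_t\phi_k(t)\|_{L^2_x}^2=\tfrac{4\pi^2\alpha_k}{T_k^2}\|\partial_s v_k(s)\|_{L^2_y}^2,
\]
as well as $\|\phi_k(t)\|_{L^\infty_x}^p=\alpha_k^p\|v_k(s)\|_{L^\infty_y}^p$ and the time-average formula
\[
\frac{1}{T_k}\int_0^{T_k}g(t)\ud t=\frac{1}{2\pi}\int_0^{2\pi}g\left(\tfrac{T_k}{2\pi}s\right)\ud s.
\]

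Second, I would read off each rescaled estimate directly from Lemma~\ref{le:bd}. The first line of Lemma~\ref{le:bv} comes from the pointwise-in-$t$ bounds $\sup_t\|\phi_k\|_{L^2}^2\leq\alpha_k$ and $\sup_t\|\partial_x\phi_k\|_{L^2}^2\leq\alpha_k$ (both by the definition of $\alpha_k$), from $\sup_t\|\partial_t\phi_k\|_{L^2}^2\lesssim\alpha_k$ given by Lemma~\ref{le:bd}, and from the averaged bound $\frac1{T_k}\int_0^{T_k}\|\phi_k\|_{L^\infty}^4\ud t\lesssim\alpha_k^4$ for the $L^4_sL^\infty_y$ term. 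The second line comes from the three remaining averaged bounds, with the $L^2_sL^\infty_y$ estimate following from the $L^2_sL^2_y$ bounds on $v_k$ and $\partial_y v_k$ via the Sobolev embedding $\|v\|_{L^\infty_y}^2\lesssim\|v\|_{L^2_y}\|\partial_y v\|_{L^2_y}$ together with Cauchy--Schwarz in $s$. The lower bound $\|v_k\|_{L^2_sL^2_y}\gtrsim 1$ is the rescaling of $\frac1{T_k}\int_0^{T_k}\|\phi_k\|_{L^2}^2\gtrsim\alpha_k$, and for part~(2) hypothesis~(H3) transforms, via the identity $\frac1{T_k}\int_0^{T_k}\|\phi_k\|_{L^\infty}^2\ud t=\frac{\alpha_k^2}{2\pi}\|v_k\|_{L^2_sL^\infty_y}^2$, into precisely the desired $\|v_k\|_{L^2_sL^\infty_y}\gtrsim 1$.

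The only minor point requiring a second look is the appearance of $T_k^{-1/2}$ (rather than the sharper $T_k^{-1}$) in the $L^2_s$-in-time bound for $\partial_s v_k$: the direct translation of $\frac1{T_k}\int_0^{T_k}\|\partial_t\phi_k\|_{L^2}^2\ud t\lesssim\alpha_k$ yields $\|\partial_s v_k\|_{L^2_sL^2_y}\lesssim T_k$, so one must invoke the uniform boundedness $\sup_k T_k<\infty$ from (H1) to weaken $T_k$ to $T_k^{1/2}$, up to a multiplicative constant depending on $\sup_k T_k$. No genuine obstacle arises in the argument.
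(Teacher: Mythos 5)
Your proof is correct and takes the same approach as the paper, which states the lemma without proof as a direct change-of-variables consequence of Lemma~\ref{le:bd} and (H3); the scaling identities you record are exactly the intended reading. Your side remark about the $T_k^{-1/2}$ factor is accurate: the direct translation of $\sup_t\|\partial_t\phi_k\|_{L^2}^2\lesssim\alpha_k$ actually gives the stronger $T_k^{-1}\|\partial_s v_k\|_{L^2_sL^2_y}\lesssim 1$ (matching the exponent appearing in the $L^\infty_s$ bound on the first line), and the stated $T_k^{-1/2}$ is simply a weakening, legitimate here because $T_k\lesssim T_k^{1/2}$ once $\sup_k T_k<\infty$ from (H1) is invoked, just as you observe.
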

In particular, the implicit constants are independent of $T_k$.

\subsection{Emergence of a dominant mode}
Since $s\mapsto v_k(s,y)$ is $2\pi$-periodic in time, we decompose it in Fourier series as follows
\[
v_k(s,y)=\frac{1}{2} a_{k,0}(y)+\sum_{n=1}^\infty a_{k,n}(y)\cos( n s )
+\sum_{n=1}^\infty b_{k,n}(y)\sin( n s )
\]
where
\begin{align*}
  & a_{k,n}(y)=\frac1\pi\int_0^{2\pi}v_k(s,y)\cos(ns)\ud s, \mbox{ for $n\geq 0$,} \\
  & b_{k,n}(y)=\frac1\pi\int_0^{2\pi}v_k(s,y)\sin(ns)\ud s,\mbox{ for $n\geq 1$.}
\end{align*}
Similarly, set
\[
q_k(s,y)=\frac{1}{2} f_{k,0}(y)+\sum_{n=1}^\infty f_{k,n}(y)\cos( n s )
+\sum_{n=1}^\infty g_{k,n}(y)\sin( n s )
\]
where
\begin{align*}
  & f_{k,n}(y)=\frac1\pi\int_0^{2\pi}q_k(s,y)\cos(ns)\ud s, \mbox{ for $n\geq 0$,}\\
  & g_{k,n}(y)=\frac1\pi\int_0^{2\pi}q_k(s,y)\sin(ns)\ud s,\mbox{ for $n\geq 1$.}
\end{align*}
From \eqref{eq:vv}, it follows that
\begin{equation}\label{eq:vn}
  \begin{split}
    & -a_{k,n}'' + \lambda_{k,n} a_{k,n} - f_{k,n} = 0, \mbox{ for $n\geq 0$,}\\
    & -b_{k,n}'' + \lambda_{k,n} b_{k,n} - g_{k,n} = 0,\mbox{ for $n\geq 1$,}
  \end{split}
\end{equation}
where for $n\geq 0$,
\begin{equation*}
  \mu_{k,n}= \frac {4\pi^2n^2}{T_k^2} - 1, \quad \lambda_{k,n}=-\frac{\mu_{k,n}}{\alpha_k^2}.
\end{equation*}

\begin{lemma}\label{le:vp}
There exists an integer $n_\star\geq 1$ such that, up to a subsequence,
\begin{equation*}
  \lim_{k\to +\infty} T_k =2\pi n_\star.
\end{equation*}
Moreover, decomposing
\begin{equation*}
  v_k = v_{k,\star} + v_{k,\perp}
\end{equation*}
where
\begin{align*}
  v_{k,\star}(s,y) & = a_{k,n_\star}(y) \cos( n_\star s)+b_{k,n_\star}(y) \sin( n_\star s),\\
  v_{k,\perp}(s,y) & = \frac 12 a_{k,0}(y)
  + \sum_{n\not\in\{0,n_\star\}} \left[ a_{k,n}(y) \cos(ns) + b_{k,n}(y) \sin(ns) \right],
\end{align*}
it holds
\begin{align*}
  & \|a_{k,n_\star}\|_{L^2}+\|b_{k,n_\star}\|_{L^2}\gtrsim \|v_{k,\star}\|_{L^2_sL^2_y}\gtrsim 1,\\
  & \|a_{k,n_\star}\|_{L^\infty} +\|b_{k,n_\star}\|_{L^\infty}\gtrsim \|v_{k,\star}\|_{L^2_sL^\infty_y}\gtrsim 1,
\end{align*} 
and
\begin{equation*}
  \|\partial_s v_{k,\perp}\|_{L^2_sL^2_y}+
  \|v_{k,\perp}\|_{L^2_sL^2_y}+\|v_{k,\perp}\|_{L^\infty_sL^2_y}  \lesssim \alpha_k,
  \quad \|v_{k,\perp}\|_{L^4_sL^\infty_y} \lesssim \alpha_k^\frac12.
\end{equation*}
\end{lemma}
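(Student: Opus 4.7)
The plan is to combine a Fourier decomposition in the periodic variable $s$ with a Fourier-space dissection in $y$ for each mode equation \eqref{eq:vn}. The symbol $\lambda_{k,n}=(T_k^2-4\pi^2 n^2)/(T_k^2\alpha_k^2)$ has magnitude of order $\alpha_k^{-2}$ except for the unique value of $n$ at which $T_k\approx 2\pi n$, so the proof naturally splits into identifying the resonant index $n_\star$ and controlling all other modes. To identify $n_\star$, I would first extract from (H1) a subsequence with $T_k\to T_\infty\in[0,\sup_k T_k]$. The claim is that $T_\infty=2\pi n_\star$ for some positive integer $n_\star\geq 1$. If not, the sequence $\{|T_\infty^2-4\pi^2 n^2|\}_{n\geq 0}$ is bounded away from zero (its infimum is attained because $4\pi^2 n^2\to\infty$), so $|\lambda_{k,n}|\gtrsim \alpha_k^{-2}$ uniformly in $n$ and large $k$; the mode estimate stated below would then force $\|v_k\|_{L^2_sL^2_y}=o(1)$, contradicting the lower bound from Lemma~\ref{le:bv}.

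The central tool is the inequality, valid whenever $|\lambda_{k,n}|$ is large,
\[
\|a_{k,n}\|_{L^2_y}\lesssim \frac{\|f_{k,n}\|_{L^2_y}}{|\lambda_{k,n}|}+\frac{\|\partial_y a_{k,n}\|_{L^2_y}}{\sqrt{|\lambda_{k,n}|}},
\]
and likewise for $(b_{k,n},g_{k,n})$. When $\lambda_{k,n}>0$ it follows from multiplying \eqref{eq:vn} by $a_{k,n}$ and integrating. When $\lambda_{k,n}<0$ the operator $-\partial_y^2+\lambda_{k,n}$ fails to be invertible on $L^2(\R)$, so I would instead take the Fourier transform in $y$ of \eqref{eq:vn} and split $\R_\xi$ at the resonant shell $\xi^2=-\lambda_{k,n}$: on $\{|\xi^2+\lambda_{k,n}|\geq|\lambda_{k,n}|/2\}$, dividing by the symbol and applying Plancherel yields the first term, while on the complementary thin shell one has $\xi^2\geq|\lambda_{k,n}|/2$, so $|\hat a_{k,n}|^2\leq 2\xi^2|\hat a_{k,n}|^2/|\lambda_{k,n}|$ and Plancherel yields the second.

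To sum over $n\neq n_\star$, I would use that $|\lambda_{k,n}|\gtrsim (1+n^2)/\alpha_k^2$ with constants independent of $k$ along the convergent subsequence, together with $\sum_n \|\partial_y a_{k,n}\|_{L^2_y}^2\lesssim \|\partial_y v_k\|_{L^2_sL^2_y}^2\lesssim 1$ (Lemma~\ref{le:bv}) and $\sum_n \|f_{k,n}\|_{L^2_y}^2\lesssim \|q_k\|_{L^2_sL^2_y}^2\lesssim \alpha_k^{-1}$; the latter estimate for $q_k$ uses the Schwartz assumption \eqref{eq:aa} through $\|U_{\alpha_k}v_k^2\|_{L^2_sL^2_y}\lesssim\alpha_k^{-1/2}\|v_k\|_{L^4_sL^\infty_y}^2$. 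These combine to give $\|v_{k,\perp}\|_{L^2_sL^2_y}\lesssim \alpha_k$; an extra factor of $n$ in the Parseval sum gives the same control on $\partial_s v_{k,\perp}$, the Sobolev embedding $H^1_s\hookrightarrow L^\infty_s$ on $[0,2\pi]$ then gives $\|v_{k,\perp}\|_{L^\infty_sL^2_y}\lesssim \alpha_k$, and Gagliardo--Nirenberg $\|w\|_{L^\infty_y}\lesssim \|w\|_{L^2_y}^{1/2}\|\partial_y w\|_{L^2_y}^{1/2}$ combined with $\|\partial_y v_{k,\perp}\|_{L^2_sL^2_y}\lesssim 1$ gives $\|v_{k,\perp}\|_{L^4_sL^\infty_y}\lesssim \alpha_k^{1/2}$. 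The lower bounds on $v_{k,\star}$ then follow from Parseval by subtraction, using $\|v_k\|_{L^2_sL^2_y}\gtrsim 1$ and, for the $L^\infty_y$ lower bound, the (H3)-consequence $\|v_k\|_{L^2_sL^\infty_y}\gtrsim 1$ from Lemma~\ref{le:bv}.

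The hard part will be the negative-$\lambda_{k,n}$ regime, where the linear operator is genuinely resonant on $L^2(\R)$; the Fourier-space splitting circumvents this by exploiting that any $L^2$ solution necessarily has its mass away from the resonant shell $\xi^2=-\lambda_{k,n}$, with the thin-shell contribution absorbed by the $H^1_y$ bound on $v_k$ provided by Lemma~\ref{le:bv}.
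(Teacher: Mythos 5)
Your proposal is correct and shares the same overall strategy as the paper---Fourier decomposition in $s$, identification of the resonant index $n_\star$ via the spectral gap $|\mu_{k,n}|\gtrsim 1+n^2$ (along a suitable subsequence), mode-by-mode elliptic estimates summed via Parseval, then Gagliardo--Nirenberg for the $L^4_sL^\infty_y$ bound---but you differ in how you prove the key mode estimate $\|a_{k,n}\|_{L^2}\lesssim|\lambda_{k,n}|^{-1}\|f_{k,n}\|_{L^2}+|\lambda_{k,n}|^{-1/2}\|a_{k,n}'\|_{L^2}$ in the oscillatory regime $\lambda_{k,n}<0$. You use a Fourier-space dissection in $\xi$, splitting away from the resonant shell $\xi^2=-\lambda_{k,n}$ and absorbing the thin-shell contribution with Plancherel and the $H^1_y$ bound. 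The paper instead simply multiplies \eqref{eq:vn} by $a_{k,n}$, integrates by parts, moves the sign-definite term to the left, and applies Cauchy--Schwarz and Young---no Fourier transform needed, and in particular no discussion of the solvability condition on the resonant shell. The paper's route is more elementary; yours is slightly more machinery-heavy but equally valid here, and arguably clarifies why the gradient term must appear. A second small divergence: for $\|v_{k,\perp}\|_{L^\infty_sL^2_y}\lesssim\alpha_k$, you invoke the Sobolev embedding $H^1_s\hookrightarrow L^\infty_s$ on the circle using your $\partial_s$ control, whereas the paper gets the same bound by summing the individual estimates $\|a_{k,n}\|_{L^2}+\|b_{k,n}\|_{L^2}\lesssim \alpha_k(1+n^2)^{-1}(\ldots)$ in $\ell^1_n$ and using the triangle inequality. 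Both work. One small imprecision in your sketch: to deduce the $L^\infty_y$ lower bound on $(a_{k,n_\star},b_{k,n_\star})$ from $\|v_{k,\star}\|_{L^2_sL^\infty_y}\gtrsim 1$ you need a pointwise triangle inequality in $y$, not Parseval; Parseval only handles the $L^2_y$ case.
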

\begin{proof}
By assumption (H1), up to the extraction of a subsequence, there exists 
\[
T_\star = \lim_{k\to+\infty} T_k\in [0,+\infty).
\]
If $\frac 1{2\pi}T_\star\in \N$ then we assume after the extraction of a subsequence that
$\frac1{2\pi}|T_k-T_\star|<\frac12$, for all $k\geq 1$.
Otherwise, we assume $\frac1{2\pi}|T_k-T_\star|<\frac12\DIST(\frac1{2\pi}T_\star,\N)$.
In both cases, for all $k\geq 1$, for all $n\geq 1$, one has the equivalences
\begin{equation*}
  n<\frac{T_\star}{2\pi} \iff n<\frac{T_k}{2\pi} \iff \mu_{k,n}<0,\quad
  n>\frac{T_\star}{2\pi} \iff n>\frac{T_k}{2\pi} \iff \mu_{k,n}>0.
\end{equation*}
Moreover, there exists $\delta>0$ (independent of $k$ and $n$) such that for all $n\in\N$,
\begin{equation}\label{eq:lb}
  n\neq  \frac{T_\star}{2\pi} \implies |\mu_{k,n}| \geq \delta (1+n^2).
\end{equation}
For the moment, set
\begin{equation*}
  v_{k,\perp}(s,y) = \frac 12 a_{k,0}(y) 
  + \sum_{n\not\in\{0,\frac 1{2\pi}T_\star\}} \left[a_{k,n}(y) \cos(ns) + b_{n,k}(y)\sin(ns)\right]
\end{equation*}
If $T_\star=0$ or if $\frac1{2\pi}{T_\star}\not\in\N$,
the summation on the right-hand side includes all $n\geq 1$.
If $\frac 1{2\pi}T_\star\in \N$, it includes all $n\geq 1$ except $n=\frac 1{2\pi}T_\star$.
Thus, if $T_\star=0$ or if $\frac1{2\pi}{T_\star}\not\in\N$ then one has $v_{k,\perp}=v_k$.
The main point of this proof is to show that $v_{k,\perp}\neq v_k$, which will imply
that $\frac 1{2\pi}T_\star\in \N\setminus\{0\}$.

From the definition of $\qqq_k$, we see that
\begin{equation}\label{eq:bf}
  |\qqq_k|\lesssim |\Uk| v_k^2+ |v_k|^3,
\end{equation}
and so using Lemma \ref{le:bv},
\begin{align*}
  \|\qqq_k \|_{L^2_sL^2_y}^2
  &\lesssim   \int_0^{2\pi}\int  \Uk^2 v_k^4
  + \|v_k^3\|_{L^2_sL^2_y}^2\\
  &\lesssim  \frac1{\alpha_k} \|v_k\|_{L^4_sL^\infty_y}^4 \|\UU\|_{L^2}^2
  + \|v_k\|_{L^4_sL^\infty_y}^4 \|v_k\|_{L^\infty_sL^2_y}^2 \lesssim \frac 1{\alpha_k}.
\end{align*}
Thus,
\begin{equation}\label{eq:bq}
  \|f_{n,k}\|_{L^2}^2+\|g_{n,k}\|_{L^2}^2\lesssim \|\qqq_k \|_{L^2_sL^2_y}^2\lesssim \frac 1{\alpha_k}.
\end{equation}

Let $n=0$.  Multiplying the first line of \eqref{eq:vn} by $a_{k,0}$, integrating on $\R$ and integrating by parts, one finds
\begin{equation*}
  \|a_{k,0}'\|_{L^2}^2+\frac1{\alpha_k^2}\|a_{k,0}\|_{L^2}^2 = \int f_{k,0} a_{k,0}
\end{equation*}
and thus in this case, by the Cauchy-Schwarz inequality,
\begin{equation}\label{eq:R0}
  \|a_{k,0}\|_{L^2}^2 \lesssim {\alpha_k^4}\|f_{k,0}\|_{L^2}^2.
\end{equation}

In the case where $\frac{T_\star}{2\pi}>1$, let $n\in\N$ be such that $1\leq n<\frac{T_\star}{2\pi}$.
Multiplying \eqref{eq:vn} by $a_{k,n}$, integrating on $\R$ and integrating by parts, one finds
\begin{equation*}
  \|a_{k,n}'\|_{L^2}^2+\frac{|\mu_{k,n}|}{\alpha_k^2} \|a_{k,n}\|_{L^2}^2 = \int f_{k,n} a_{k,n}
\end{equation*}
and thus in this case, by \eqref{eq:lb} and  the Cauchy-Schwarz inequality,
\begin{equation}\label{eq:R1}
  \|a_{k,n}\|_{L^2}^2 \lesssim \frac{\alpha_k^4}{(1+n^2)^2} \|f_{k,n}\|_{L^2}^2.
\end{equation}
Similarly, using the second line of \eqref{eq:vn},
\begin{equation}\label{eq:R7}
  \|b_{k,n}\|_{L^2}^2 \lesssim \frac{\alpha_k^4}{(1+n^2)^2} \|g_{k,n}\|_{L^2}^2.
\end{equation}

Let $n\in\N$ be such that $n>\frac{T_\star}{2\pi}$.
Multiplying the first line of \eqref{eq:vn} by $a_{k,n}$, integrating on $\R$ and integrating by parts, one finds
\begin{equation*}
  \frac{|\mu_{k,n}|}{\alpha_k^2} \|a_{k,n}\|_{L^2}^2 = \|a_{k,n}'\|_{L^2}^2 - \int f_{k,n} a_{k,n}
\end{equation*}
and thus in this case, by \eqref{eq:lb} and the Cauchy-Schwarz inequality,
\begin{equation}\label{eq:R2}
  \|a_{k,n}\|_{L^2}^2 \lesssim \frac{\alpha_k^2}{1+n^2}\|a_{k,n}'\|_{L^2}^2
  +  \frac{\alpha_k^4}{(1+n^2)^2}\|f_{k,n}\|^2  .
\end{equation}
Similarly, using the second line of \eqref{eq:vn},
\begin{equation}\label{eq:R8}
  \|b_{k,n}\|_{L^2}^2 \lesssim \frac{\alpha_k^2}{1+n^2}\|b_{k,n}'\|_{L^2}^2
  +  \frac{\alpha_k^4}{(1+n^2)^2}\|g_{k,n}\|^2  .
\end{equation}

Using the definition of $v_\perp$ and the Parceval identity, we deduce from \eqref{eq:R0}-\eqref{eq:R8} that
\begin{equation*}
  \|\partial_s v_{k,\perp}\|_{L^2_sL^2_y}^2+
  \|v_{k,\perp}\|_{L^2_sL^2_y}^2 \lesssim \alpha_k^2 \|\partial_y v_k\|_{L^2_sL^2_y}^2 + \alpha_k^4 \|\qqq_k\|_{L^2_sL^2_y}^2.
\end{equation*}
Therefore, using \eqref{eq:bq} and Lemma \ref{le:bv}, we have proved
\begin{equation*}
  \|v_{k,\perp}\|_{L^2_sL^2_y}  \lesssim \alpha_k  .
\end{equation*}

Since $\|v_k\|_{L^2_sL^2_y}\gtrsim 1$ from Lemma \ref{le:bv}, 
we obtain that $v_{k,\perp}\neq v_k$ and so the existence of a positive integer $n_\star$ such that
$T_\star = 2\pi n_\star$. 
(Note that this argument excludes the case $T_\star=0$.)
As a consequence, we also obtain that $\|a_{k,n_\star}\|_{L^2}+\|b_{k,n_\star}\|_{L^2}\gtrsim \|v_{k,\star}\|_{L^2_sL^2_y} \gtrsim 1$. 
Moreover, the definition of $v_{k,\perp}$ above now matches the one given in the 
statement of the lemma.

Using again \eqref{eq:R1}-\eqref{eq:R8}, we observe that for any $n\neq n_\star$,
\begin{align*}
  \|a_{k,n}\|_{L^2} & \lesssim \frac{\alpha_k}{\sqrt{1+n^2}}
  \left( \|a_{k,n}'\|_{L^2} + \alpha_k \|f_{k,n}\|_{L^2}\right)\\
  & \lesssim \alpha_k \left( \frac 1{1+n^2} + \|a_{k,n}'\|_{L^2}^2 + \alpha_k^2 \|f_{k,n}\|_{L^2}^2\right),
\end{align*}
and a similar estimate for $\|b_{k,n}\|_{L^2}$.
Using the Parceval identity and then Lemma \ref{le:bv} and \eqref{eq:bq}, we deduce that
\begin{equation*}
  \sum_{n\neq n_\star} \left(\|a_{k,n}\|_{L^2} + \|b_{k,n}\|_{L^2}\right)
  \lesssim \alpha_k \left(1+ \|\partial_y v_k\|_{L^2_sL^2_y}^2 + \alpha_k^2 \|\qqq_k\|_{L^2_sL^2_y}^2\right)
  \lesssim \alpha_k.
\end{equation*}
By the triangle inequality, for any $s\in[0,2\pi]$,
\begin{equation*}
  \|v_{k,\perp}(s)\|_{L^2} \lesssim \sum_{n\neq n_\star} \left(\|a_{k,n}\|_{L^2} + \|b_{k,n}\|_{L^2}\right)\lesssim \alpha_k.
\end{equation*}
This proves the desired estimate for $\|v_{k,\perp}\|_{L^\infty_sL^2_y}$.
As a consequence, we also obtain
\begin{equation*}
  \|v_{k,\perp}\|_{L^4_sL^\infty_y}^2\lesssim
  \|v_{k,\perp}\|_{L^\infty_sL^2_y}\|\partial_y v_{k,\perp}\|_{L^2_sL^2_y} 
  \lesssim \alpha_k \|\partial_y v_k\|_{L^2_sL^2_y} \lesssim \alpha_k .
\end{equation*}
In particular, $\|v_{k,\perp}\|_{L^2_sL^\infty_y}\lesssim \alpha_k^\frac12$.
Since $\|v_k\|_{L^2_sL^\infty_y}\gtrsim 1$ from Lemma \ref{le:bv} 
this implies that
\[
\|a_{k,n_\star}\|_{L^\infty} +\|b_{k,n_\star}\|_{L^\infty} \gtrsim \|v_{k,\star}\|_{L^2_sL^\infty_y} 
\gtrsim \|v_k\|_{L^2_sL^\infty_y} \gtrsim 1.
\]
(Recall that $\|v_k\|_{L^2_sL^\infty_y}\gtrsim 1$ was a consequence of the assumption (H3).)
\end{proof}

%

Now, we prove another estimate based on the method of variation of the constant.

\begin{lemma}\label{le:vc} 
For all $n\neq n_\star$, it holds
\begin{equation*}
  \|a_{k,n}\|_{L^\infty} +\|b_{k,n}\|_{L^\infty} 
\lesssim \alpha_k .
\end{equation*}
\end{lemma}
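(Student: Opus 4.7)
I would invert the ODE \eqref{eq:vn} using the Green's function (or equivalently the variation of constants formula) for the one-dimensional Helmholtz-type operator $-\partial_y^2+\lambda_{k,n}$, splitting into two cases according to the sign of $\mu_{k,n}$. In both cases I expect a pointwise bound of the form
\[
\|a_{k,n}\|_{L^\infty}+\|b_{k,n}\|_{L^\infty}\lesssim \frac{1}{\sqrt{|\lambda_{k,n}|}}\bigl(\|f_{k,n}\|_{L^1}+\|g_{k,n}\|_{L^1}\bigr).
\]
Since \eqref{eq:lb} gives $|\mu_{k,n}|\geq \delta>0$ whenever $n\neq n_\star$, one has $\sqrt{|\lambda_{k,n}|}\gtrsim 1/\alpha_k$, and combined with a uniform $L^1$-bound $\|f_{k,n}\|_{L^1}+\|g_{k,n}\|_{L^1}\lesssim 1$, this yields the announced estimate $\lesssim \alpha_k$.

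For $n\in\{0,1,\ldots,n_\star-1\}$ one has $\lambda_{k,n}>0$ and $-\partial_y^2+\lambda_{k,n}$ is positive with the exponential Green's function $G_{k,n}(y,z)=(2\sqrt{\lambda_{k,n}})^{-1}e^{-\sqrt{\lambda_{k,n}}|y-z|}$, so the convolution bound $|a_{k,n}(y)|\leq \|G_{k,n}(y,\cdot)\|_{L^\infty_z}\|f_{k,n}\|_{L^1}$ gives the pointwise estimate. For $n>n_\star$, set $k_n=\sqrt{|\lambda_{k,n}|}$, so that \eqref{eq:vn} reads $a_{k,n}''+k_n^2 a_{k,n}=-f_{k,n}$. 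Since $a_{k,n},f_{k,n}\in L^2$ by Lemma \ref{le:vp} and Parseval, the equation itself forces $a_{k,n}''\in L^2$, hence $a_{k,n}\in H^2(\R)$; in particular $a_{k,n}(y)$ and $a_{k,n}'(y)$ tend to $0$ as $|y|\to +\infty$. Writing Duhamel's formula with base point $y_0$ and letting $y_0\to +\infty$, the homogeneous contribution vanishes and one is left with
\[
a_{k,n}(y)=-\frac{1}{k_n}\int_y^{+\infty}\sin\bigl(k_n(z-y)\bigr) f_{k,n}(z)\ud z,
\]
which yields $|a_{k,n}(y)|\leq k_n^{-1}\|f_{k,n}\|_{L^1}$. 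The same reasoning applies to $b_{k,n}$ with $g_{k,n}$ in place of $f_{k,n}$.

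The uniform bound $\|\qqq_k\|_{L^1_sL^1_y}\lesssim 1$, which by definition of the Fourier coefficients implies $\|f_{k,n}\|_{L^1}+\|g_{k,n}\|_{L^1}\lesssim 1$, is verified term by term in the decomposition $\qqq_k=\Uk v_k^2+\tfrac16 v_k^3+\alpha_k^{-3}\pp(\alpha_k v_k)$. For the first term the scaling change $x=y/\alpha_k$ gives $\int_0^{2\pi}\!\int |\Uk|v_k^2\leq \|U\|_{L^1}\|v_k\|_{L^2_sL^\infty_y}^2\lesssim 1$; for the second, Hölder yields $\|v_k\|_{L^3_{s,y}}^3\leq \|v_k\|_{L^2_sL^\infty_y}\|v_k\|_{L^\infty_sL^2_y}\|v_k\|_{L^2_sL^2_y}\lesssim 1$; for the third, since $|\alpha_kv_k|\lesssim \alpha_k^{1/2}\leq 1$ gives $|\pp(\alpha_kv_k)|\lesssim \alpha_k^4v_k^4$, one bounds $\alpha_k\|v_k\|_{L^4_{s,y}}^4\leq \alpha_k\|v_k\|_{L^\infty_{s,y}}^2\|v_k\|_{L^2_{s,y}}^2\lesssim 1$ via the Sobolev embedding $\|v_k\|_{L^\infty_{s,y}}^2\lesssim \|v_k\|_{L^\infty_sL^2_y}\|\partial_yv_k\|_{L^\infty_sL^2_y}\lesssim 1/\alpha_k$. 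All three estimates rest on Lemma \ref{le:bv}.

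The main obstacle is the oscillatory regime $n>n_\star$: the natural Green's function of $-\partial_y^2-k_n^2$ is not integrable on $\R$, and one must pin down the specific $L^2$ solution of \eqref{eq:vn} by exploiting the elliptic regularity $a_{k,n}\in H^2$ and the ensuing decay of $a_{k,n}$ and $a_{k,n}'$ at infinity in order to kill the homogeneous contribution in Duhamel's formula. Once this identification is made, the $L^\infty$ estimate reduces to the elementary $L^1$-convolution bound above.
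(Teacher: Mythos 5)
Your proof is correct and takes essentially the same approach as the paper: establish the $L^1$ bound $\|f_{k,n}\|_{L^1}+\|g_{k,n}\|_{L^1}\lesssim 1$ from Lemma~\ref{le:bv}, then invert \eqref{eq:vn} by variation of constants to obtain $\|a_{k,n}\|_{L^\infty}\lesssim |\lambda_{k,n}|^{-1/2}\|f_{k,n}\|_{L^1}\lesssim\alpha_k$ using \eqref{eq:lb}. The only real difference is in the oscillatory case $n>n_\star$, where the paper writes a two-sided representation and remarks parenthetically that $\widehat f_{k,n}(\pm\sqrt{-\lambda_{k,n}})=0$, whereas you pin down the $L^2$ solution via a one-sided Duhamel formula using the $H^2$ regularity and decay of $a_{k,n}$ at $+\infty$; this is a cleaner, self-contained way of making the same identification.
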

\begin{proof}
To begin with, we observe 
by the Fubini inequality and then \eqref{eq:bf}, for all $n\geq 0$,
\begin{align*}
  \|f_{k,n}\|_{L^1} +\|g_{k,n}\|_{L^1} \lesssim \|q_{k}\|_{L^1_sL^1_y}
  &\lesssim  \int_0^{2\pi} \int \left[|\Uk| v_k^2
  + |v_k|^3\right]\\
  &\lesssim \|v_k\|_{L^2_sL^\infty_y}^2 \|\UU\|_{L^1} + \|v_k\|_{L^\infty_sL^2_y}^2 \|v_k\|_{L^1_sL^\infty_y}.
\end{align*}
Thus, from Lemma \ref{le:bv},
\begin{equation}\label{eq:if}
  \|f_{k,n}\|_{L^1}+\|g_{k,n}\|_{L^1}\lesssim 1.
\end{equation}
Let $0\leq n < n_\star$. From the proof of Lemma \ref{le:vp} (see \eqref{eq:lb}), we have
$\mu_{k,n}<0$ and 
\begin{equation*}
  \lambda_{k,n} \geq \delta \,\frac{1+n^2}{\alpha_k^2}.
\end{equation*}
Using the fact that $a_{k,n}$ is an $H^1$ solution of the first line of \eqref{eq:vn} on $\R$, we have
\begin{equation*}
  a_{k,n}(y) =
  -\frac{e^{-\sqrt{\lambda_{k,n}} y}}{2\sqrt{\lambda_{k,n}}} \int_{-\infty}^y e^{\sqrt{\lambda_{k,n}} z} f_{k,n}(z) dz
  -\frac{e^{\sqrt{\lambda_{k,n}} y}}{2\sqrt{\lambda_{k,n}}} \int_y^{\infty} e^{-\sqrt{\lambda_{k,n}} z} f_{k,n}(z) dz.
\end{equation*}
Thus, by \eqref{eq:if}, for any $0\leq n< n_\star$, for all $y\in \R$,
\begin{equation*}
  |a_{k,n}(y)|^2 \lesssim \frac 1{\lambda_{k,n}} \|f_{k,n}\|_{L^1}^2\lesssim \frac{\alpha_k^2}{1+n^2} .
\end{equation*}

Let $n>n_\star$. We have $\mu_{k,n}>0$ and 
\begin{equation*}
  - \lambda_{k,n} \geq \delta \,\frac{1+n^2}{\alpha_k^2}.
\end{equation*}
As before, we have
\begin{equation*}
  a_{k,n}(y) =
  -\frac{e^{-\ii\sqrt{-\lambda_{k,n}} y}}{2\ii\sqrt{-\lambda_{k,n}}} \int_{-\infty}^y e^{\ii\sqrt{-\lambda_{k,n}} z} f_{k,n}(z) dz
  +\frac{e^{\ii\sqrt{-\lambda_{k,n}} y}}{2\ii\sqrt{-\lambda_{k,n}}} \int_y^{\infty} e^{-\ii\sqrt{-\lambda_{k,n}} z} f_{k,n}(z) dz,
\end{equation*}
(by \eqref{eq:vn}, we know that $\widehat f_{k,n}(\pm \sqrt{-\lambda_{k,n}})=0$),
and thus, for $n>n_\star$, for all $y\in \R$,
\begin{equation*}
  |a_{k,n}(y)|^2 \lesssim \frac 1{|\lambda_{k,n}|} \|f_{k,n}\|_{L^1}^2\lesssim \frac{\alpha_k^2}{1+n^2} .
\end{equation*}
We proceed similarly to estimate $|b_{k,n}(y)|$ for $n\neq n_\star$.
Therefore, for $n\neq n_\star$,
\begin{equation*}
  \|a_{k,n}\|_{L^\infty}^2+\|b_{k,n}\|_{L^\infty}^2 \lesssim \frac{\alpha_k^2}{1+n^2}
\lesssim \alpha_k^2.
\end{equation*}
\end{proof}

\subsection{Asymptotic behavior of the period}

\begin{lemma}\label{le:bl}
The sequence $(\lambda_{k,n_\star})_k$ is bounded.
\end{lemma}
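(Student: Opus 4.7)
The plan is to test the system \eqref{eq:vn} at $n=n_\star$ against the dominant mode itself. Multiplying the first line of \eqref{eq:vn} at $n=n_\star$ by $a_{k,n_\star}$, the second line by $b_{k,n_\star}$, integrating over $\R$, integrating by parts, and summing yields the identity
\[
\lambda_{k,n_\star}\bigl(\|a_{k,n_\star}\|_{L^2}^2 + \|b_{k,n_\star}\|_{L^2}^2\bigr) + \|a_{k,n_\star}'\|_{L^2}^2 + \|b_{k,n_\star}'\|_{L^2}^2 = \int \bigl( f_{k,n_\star} a_{k,n_\star} + g_{k,n_\star} b_{k,n_\star} \bigr) \ud y.
\]
By Lemma \ref{le:vp}, the $L^2$ prefactor of $\lambda_{k,n_\star}$ is $\gtrsim 1$, so it suffices to bound the remaining two contributions uniformly in $k$.

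The derivative term is immediate: Parseval applied to the Fourier expansion of $\partial_y v_k$ gives $\|a_{k,n_\star}'\|_{L^2}^2 + \|b_{k,n_\star}'\|_{L^2}^2 \lesssim \|\partial_y v_k\|_{L^2_s L^2_y}^2 \lesssim 1$ by Lemma \ref{le:bv}. For the source term, Fourier orthogonality yields
\[
\int \bigl( f_{k,n_\star} a_{k,n_\star} + g_{k,n_\star} b_{k,n_\star} \bigr) \ud y = \frac{1}{\pi}\int_0^{2\pi}\int_\R q_k v_{k,\star} \ud y \ud s,
\]
so the task reduces to showing that this double integral is bounded uniformly in $k$. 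Using \eqref{eq:bf} together with $|\pp(\alpha_k v_k)| \lesssim \alpha_k^4 v_k^4$ (valid for $k$ large since Gagliardo-Nirenberg and Lemma \ref{le:bv} give $\|\alpha_k v_k\|_{L^\infty_s L^\infty_y} \lesssim \alpha_k^{1/2}\to 0$), one has the pointwise bound $|q_k|\lesssim |\Uk| v_k^2 + |v_k|^3 + \alpha_k v_k^4$, so three space-time integrals must be controlled.

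The cubic contribution is handled by Hölder: from $\|v_k\|_{L^4_sL^4_y}\lesssim 1$ and $\|v_{k,\star}\|_{L^4_sL^4_y}\lesssim 1$, both obtained via the interpolation $\|w(s)\|_{L^4_y}^4\lesssim\|w(s)\|_{L^\infty_y}^2\|w(s)\|_{L^2_y}^2$ combined with Lemma \ref{le:bv}, one gets $\int_0^{2\pi}\int_\R |v_k|^3|v_{k,\star}|\lesssim 1$. The quartic term contributes a vanishing $O(\alpha_k^{1/2})$ correction using $\|v_k\|_{L^\infty_s L^\infty_y}\lesssim \alpha_k^{-1/2}$. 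The main obstacle is the quadratic term: the concentrated potential has $\|\Uk\|_{L^\infty_y}\sim \alpha_k^{-1}$, which blows up, so one must avoid placing $\Uk$ in $L^\infty$. The remedy is to exploit the scale-invariant identity $\|\Uk\|_{L^1_y}=\|\UU\|_{L^1}$ and estimate
\[
\int_0^{2\pi}\int_\R |\Uk| v_k^2 |v_{k,\star}| \ud y \ud s \leq \|\Uk\|_{L^1_y}\,\sup_{y\in\R}\int_0^{2\pi} v_k^2|v_{k,\star}|\ud s \lesssim \|v_k\|_{L^4_sL^\infty_y}^2\,\|v_{k,\star}\|_{L^2_sL^\infty_y}\lesssim 1,
\]
where the last step uses Lemma \ref{le:bv} together with $\|v_{k,\star}\|_{L^2_sL^\infty_y}\leq \|v_k\|_{L^2_sL^\infty_y}+\|v_{k,\perp}\|_{L^2_sL^\infty_y}$ and the smallness of $v_{k,\perp}$ from Lemma \ref{le:vp}. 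Assembling these estimates in the identity yields $|\lambda_{k,n_\star}|\lesssim 1$, as claimed.
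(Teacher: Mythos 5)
Your proof is correct and follows essentially the same route as the paper's: test the system \eqref{eq:vn} at $n=n_\star$ against $(a_{k,n_\star},b_{k,n_\star})$, use the lower bound $\|a_{k,n_\star}\|_{L^2}+\|b_{k,n_\star}\|_{L^2}\gtrsim 1$ from Lemma~\ref{le:vp}, bound the derivative term by $\|\partial_y v_k\|_{L^2_sL^2_y}^2\lesssim 1$, and control the source pairing by putting $\Uk$ in $L^1_y$ (scale-invariant) and the remaining factors in suitable mixed Lebesgue norms. The paper packages the last step as $\|a_{k,n_\star}\|_{L^\infty}\|f_{k,n_\star}\|_{L^1}$ with the $L^1$ bound \eqref{eq:if}, while you estimate $\int_0^{2\pi}\!\int_\R q_k\,v_{k,\star}$ directly via Hölder; the two are equivalent bookkeeping of the same estimate.
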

\begin{proof}
From \eqref{eq:vn}, recall the equations for $(a_{k,n_\star},b_{k,n_\star})$
\begin{equation}\label{eq:vs}
  \begin{aligned}
  & a_{k,n_\star}'' - \lambda_{k,n_\star} a_{k,n_\star} - f_{k,n_\star} =0\\
  & b_{k,n_\star}'' - \lambda_{k,n_\star} b_{k,n_\star} - g_{k,n_\star} =0.
  \end{aligned}
\end{equation}
Multiplying the first equation by $a_{k,n_\star}$, integrating and then integrating by parts, we obtain
\begin{equation*}
  \lambda_{k,n_\star} \|a_{k,n_\star}\|_{L^2}^2 = - \|a_{k,n_\star}'\|_{L^2}^2 - \int f_{k,n_\star} a_{k,n_\star}.
\end{equation*}
Similarly,
\begin{equation*}
  \lambda_{k,n_\star} \|b_{k,n_\star}\|_{L^2}^2 = - \|b_{k,n_\star}'\|_{L^2}^2 - \int g_{k,n_\star} b_{k,n_\star}.
\end{equation*}
By  $\|a_{k,n_\star}\|_{L^2} +\|b_{k,n_\star}\|_{L^2}\gtrsim 1$
(Lemma \ref{le:vp}), we obtain
\begin{equation*}
  |\lambda_{k,n_\star}| \lesssim 
  \|a_{k,n_\star}'\|_{L^2}^2 +\|b_{k,n_\star}'\|_{L^2}^2+ 
  \| a_{k,n_\star}\|_{L^\infty}\| f_{k,n_\star} \|_{L^1}
  +\|b_{k,n_\star}\|_{L^\infty} \| g_{k,n_\star} \|_{L^1} .
\end{equation*}
First, by the Plancherel identity and then Lemma \ref{le:bv},
\begin{equation*}
  \|a_{k,n_\star}'\|_{L^2}^2 +\|b_{k,n_\star}'\|_{L^2}^2 \lesssim \|\partial_y v_k\|_{L^2_sL^2_y}^2
   \lesssim 1.
\end{equation*}
Second, by the Cauchy-Schwarz inequality in the variable $s\in [0,2\pi]$,
and then Lemma \ref{le:bv},
\begin{equation*}
  \|a_{k,n_\star}\|_{L^\infty}+\|b_{k,n_\star}\|_{L^\infty} \lesssim \|v_k\|_{L^1_sL^\infty_y} 
  \lesssim \|v_k\|_{L^2_sL^\infty_y} \lesssim 1.
\end{equation*}
Third, we recall from \eqref{eq:if} that $\|f_{k,n}\|_{L^1}+\|g_{k,n}\|_{L^1}\lesssim 1$.
Therefore, $|\lambda_{k,n_\star}|\lesssim 1$.
\end{proof}
Up to the extraction of a subsequence (as before, we do not change the notation for such extraction), 
there exists $\lambda\in \R$ such that
$\lim_{k\to+\infty} \lambda_{k,n_*} = \lambda$.
This is equivalent to say that
\begin{equation}\label{eq:Tl}
  \lim_{k\to+\infty} \frac 1{\alpha_k^2} \left(1-\frac{T_\star^2}{T_k^2}\right)=\lambda.
\end{equation}
\begin{remark}\label{rk:co}
In the case where $n_\star=1$, it follows from the arguments in \cite{coron}
that $\lambda\geq 0$ without the need of assumption (H3).
In the sequel, we will use assumption (H3) to obtain that $\lambda>0$ 
in the general case.
\end{remark}

\subsection{Equation of the dominant mode}
Our goal is to pass to the limit $k\to +\infty$ in the system \eqref{eq:vs} of $(a_{k,n_\star},b_{k,n_\star})$.
We already know that $(\lambda_{k,n_\star})$ converges to $\lambda$.
Now, we analyze the asymptotic behavior of the second member $(f_{k,n_\star},g_{k,n_\star})$ of \eqref{eq:vs}.

\begin{lemma}\label{le:6}
It holds
\begin{align*}
   \lim_{k\to +\infty} \left\| f_{k,n_\star} - \frac 18 a_{k,n_\star}(a_{k,n_\star}^2+b_{k,n_\star}^2) \right\|_{L^2} & = 0,\\
  \lim_{k\to +\infty} \left\| g_{k,n_\star} - \frac 18 b_{k,n_\star}(a_{k,n_\star}^2+b_{k,n_\star}^2) \right\|_{L^2} & = 0.
\end{align*}
\end{lemma}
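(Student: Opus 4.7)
The plan is to substitute $q_k = U_{\alpha_k}v_k^2 + \tfrac{1}{6}v_k^3 + \tfrac{1}{\alpha_k^3}p(\alpha_k v_k)$ into the definitions of $f_{k,n_\star}$ and $g_{k,n_\star}$ and show that, modulo $L^2_y$-errors vanishing as $k\to+\infty$, only the $\cos(n_\star s)$ and $\sin(n_\star s)$ projections of $\tfrac{1}{6}v_{k,\star}^3$ contribute. The essential algebraic observation is that $\cos^3\theta = \tfrac{3}{4}\cos\theta + \tfrac{1}{4}\cos 3\theta$ together with its analogues yields
\[
v_{k,\star}^3 = \tfrac{3}{4}a_{k,n_\star}(a_{k,n_\star}^2+b_{k,n_\star}^2)\cos(n_\star s) + \tfrac{3}{4}b_{k,n_\star}(a_{k,n_\star}^2+b_{k,n_\star}^2)\sin(n_\star s) + (\text{modes at }3n_\star),
\]
so that the $\cos(n_\star s)$ and $\sin(n_\star s)$ projections of $\tfrac{1}{6}v_{k,\star}^3$ produce exactly the claimed main terms. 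Consequently the task reduces to showing that the $n_\star$-Fourier projections of $U_{\alpha_k}v_k^2$, of $\tfrac{1}{6}(v_k^3-v_{k,\star}^3)$, and of $\tfrac{1}{\alpha_k^3}p(\alpha_k v_k)$ all tend to $0$ in $L^2_y$.

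The higher-order nonlinearity and the cubic remainder are the easy parts. The bound $\|\phi_k\|_{L^\infty}^2\lesssim\alpha_k$ from the opening of the proof of Lemma \ref{le:bd} implies $|\alpha_k v_k|\lesssim\alpha_k^{1/2}\ll 1$, so \eqref{eq:pp} gives $|\tfrac{1}{\alpha_k^3}p(\alpha_k v_k)|\lesssim \alpha_k v_k^4$; Hölder combined with the integrals controlled by Lemma \ref{le:bv} produces an $L^2_sL^2_y$-norm of order $\alpha_k^{1/2}$. For the cubic remainder, writing $v_k^3-v_{k,\star}^3 = 3v_{k,\star}^2v_{k,\perp}+3v_{k,\star}v_{k,\perp}^2+v_{k,\perp}^3$, each of the three cross terms is controlled in $L^2_sL^2_y$ by a positive power of $\alpha_k$, using the estimates of Lemma \ref{le:vp} (in particular $\|v_{k,\perp}\|_{L^2_sL^2_y}\lesssim\alpha_k$ and $\|v_{k,\perp}\|_{L^4_sL^\infty_y}\lesssim\alpha_k^{1/2}$), the elementary $\|v_{k,\star}\|_{L^\infty_sL^\infty_y}\lesssim 1$, and the crude bound $\|v_k\|_{L^\infty}\lesssim\alpha_k^{-1/2}$.

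The main obstacle is the quadratic term $U_{\alpha_k}v_k^2$. Since $\|U_{\alpha_k}\|_{L^2_y}\asymp\alpha_k^{-1/2}$ and the available $L^p_sL^q_y$ bounds on $v_k$ lie exactly on the scaling-critical line, any direct Hölder estimate yields only $O(1)$ and not $o(1)$; the decay must instead be extracted from the Fourier structure. Splitting $v_k^2 = v_{k,\star}^2+2v_{k,\star}v_{k,\perp}+v_{k,\perp}^2$, the square $v_{k,\star}^2$ has Fourier support only at frequencies $0$ and $2n_\star$ and so does not contribute at the $n_\star$ mode. For $2v_{k,\star}v_{k,\perp}$, expanding via $2\cos^2\theta = 1+\cos 2\theta$ and $2\sin\theta\cos\theta=\sin 2\theta$ and using orthogonality identifies the $\cos(n_\star s)$ coefficient as
\[
a_{k,n_\star}(a_{k,0}+a_{k,2n_\star}) + b_{k,n_\star}b_{k,2n_\star},
\]
which is $O(\alpha_k)$ in $L^\infty_y$ by the decisive estimate of Lemma \ref{le:vc}. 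The $\cos(n_\star s)$ coefficient of $v_{k,\perp}^2$ is pointwise bounded by $\tfrac{1}{\pi}\|v_{k,\perp}(\cdot,y)\|_{L^2_s}^2$, and Minkowski's inequality yields $\sup_y\|v_{k,\perp}(\cdot,y)\|_{L^2_s}^2 \leq\|v_{k,\perp}\|_{L^2_sL^\infty_y}^2\lesssim\alpha_k$. Multiplying each of these $O(\alpha_k)$ $L^\infty_y$-bounds by $\|U_{\alpha_k}\|_{L^2_y}\lesssim\alpha_k^{-1/2}$ produces an $L^2_y$-norm $\lesssim\alpha_k^{1/2}\to 0$. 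The sine projection is handled symmetrically, completing the proof.
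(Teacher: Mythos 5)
Your proposal is correct and follows essentially the same route as the paper: decompose $q_k$ into the quadratic, cubic, and remainder pieces, split $v_k=v_{k,\star}+v_{k,\perp}$, use the trigonometric identities to extract the $\tfrac18 a_{k,n_\star}(a_{k,n_\star}^2+b_{k,n_\star}^2)$ main term from $\tfrac16 v_{k,\star}^3$, observe that $v_{k,\star}^2$ carries no $n_\star$-mode, and invoke Lemma \ref{le:vc} for the $v_{k,\star}v_{k,\perp}$ cross term while using Lemma \ref{le:vp} smallness for the remaining pieces. The only cosmetic deviation is in bounding the $U_{\alpha_k}v_{k,\perp}^2$ contribution, where the paper pairs $\|U_{\alpha_k}\|_{L^\infty_y}\lesssim\alpha_k^{-1}$ with $\|v_{k,\perp}\|_{L^4_sL^4_y}^2$ while you pair $\|U_{\alpha_k}\|_{L^2_y}\lesssim\alpha_k^{-\frac12}$ with a pointwise-in-$y$ bound on the $s$-integral; both yield $O(\alpha_k^{1/2})$. (Your side remark that direct Hölder gives ``only $O(1)$'' understates the loss --- without the decomposition one only gets $O(\alpha_k^{-1/2})$ --- but that does not affect the argument.)
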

\begin{proof}
Recall from \eqref{eq:fk} that
\begin{equation}\label{eq:fk2}
  \qqq_k =\Uk  v_k^2
  +\frac16 v_k^3 + \frac1{\alpha_k^3}\pp(\alpha_k v_k) =\qqq_{k}^{\rm I}+\qqq_{k}^{\rm II}+\qqq_{k}^{\rm III}.
\end{equation}
It is essential for this proof to use that the component $v_{k,\perp}$ of $v_k$ enjoys better estimates
than $v_k$ by Lemma \ref{le:vp}.

First, expanding $v_k = v_{k,\star}+v_{k,\perp}$, one has
\begin{equation*}
  \qqq_k^{\rm I} =\Uk  \left(v_{k,\star}^2+2v_{k,\star}v_{k,\perp}
  +v_{k,\perp}^2\right).
\end{equation*}
By standard trigonometry,
\begin{align*}
  v_{k,\star}^2(s) & = \left(a_{k,n_\star} \cos(n_\star s) + b_{k,n_\star} \sin(n_\star s)\right)^2 \\
  & = a_{k,n_\star}^2 \cos^2(n_\star s) + 2 a_{k,n_\star} b_{k,n_\star} \cos(n_\star s) \sin(n_\star s)
  + b_{k,n_\star}^2 \sin^2(n_\star s)\\
  & = \frac 12 (a_{k,n_\star}^2+b_{k,n_\star}^2)
      + \frac 12 (a_{k,n_\star}^2-b_{k,n_\star}^2) \cos(2 n_*s)
      + a_{k,n_\star} b_{k,n_\star} \sin (2 n_\star s).
\end{align*}
In particular, the first term in $q_k^{\rm I}$
does not contribute to $f_{k,n_\star}^{\rm I}$ nor to $g_{k,n_\star}^{\rm I}$.
Moreover,
\begin{align*}
  &\frac 1\pi\int_0^{2\pi} v_{k,\perp} v_{k,\star} \cos (n_\star s)
   = \frac 1\pi\int_0^{2\pi} v_{k,\perp} \left( a_{k,n_\star} \cos^2(n_\star s) + b_{k,n_\star} \sin(n_\star s)
  \cos(n_\star s)\right)\\
  &\quad = \frac {a_{k,n_\star}}{2\pi} \int_0^{2\pi} v_{k,\perp} 
  + \frac {a_{k,n_\star}}{2\pi} \int_0^{2\pi} v_{k,\perp}  \cos(2n_\star s)
  + \frac {b_{k,n_\star}}{2\pi} \int_0^{2\pi} v_{k,\perp}  \sin(2n_\star s)\\
  &\quad = \frac 12 a_{k,0}a_{k,n_\star} + \frac 12a_{k,n_\star} a_{k,2n_\star}
  +\frac 12 b_{k,n_\star} b_{k,2n_\star},
\end{align*}
and similarly,
\begin{equation*}
  \frac 1\pi\int_0^{2\pi} v_{k,\perp} v_{k,\star} \sin (n_\star s)
  = \frac 12 a_{k,0}b_{k,n_\star} - \frac 12 b_{k,n_\star}a_{k,2n_\star}
  +\frac 12 a_{k,n_\star}b_{k,2n_\star} .
\end{equation*}
This implies that
\begin{align*}
  \|f_{k,n_\star}^{\rm I}\|_{L^2}+\|g_{k,n_\star}^{\rm I}\|_{L^2}
  &\lesssim \|\Uk a_{k,0} a_{k,n_\star} \|_{L^2}
  + \|\Uk  a_{k,n_\star}a_{k,2n_\star} \|_{L^2}
  +\|\Uk  b_{k,n_\star} b_{k,2n_\star}\|_{L^2}\\
  &\quad   +\|\Uk a_{k,0} b_{k,n_\star} \|_{L^2} +\|\Uk  b_{k,n_\star} a_{k,2n_\star}\|_{L^2}
  +\|\Uk a_{k,n_\star} b_{k,2n_\star} \|_{L^2}\\
&\quad   +\left\|\Uk  v_{k,\perp}^2 \right\|_{L^2_sL^2_y}.
\end{align*}
Using Lemma \ref{le:bv} and Lemma \ref{le:vc} (with $n_\star\neq 0$), we have
\begin{equation*}
  \left\|\Uk  a_{k,0} a_{k,n_\star} \right\|_{L^2}
  \lesssim  \alpha_k^{-\frac 12} \|\UU\|_{L^2} \|a_{k,0}\|_{L^\infty}\|a_{k,n_\star}\|_{L^\infty}
  \lesssim \alpha_k^\frac12,
\end{equation*}
and by the same argument,
\begin{align*}
&\|\Uk a_{k,0} a_{k,n_\star} \|_{L^2}
  + \|\Uk  a_{k,n_\star}a_{k,2n_\star} \|_{L^2}
  +\|\Uk  b_{k,n_\star} b_{k,2n_\star}\|_{L^2}\\
  &\quad   +\|\Uk a_{k,0} b_{k,n_\star} \|_{L^2} +\|\Uk  b_{k,n_\star} a_{k,2n_\star}\|_{L^2}
  +\|\Uk a_{k,n_\star} b_{k,2n_\star} \|_{L^2}
  \lesssim \alpha_k^\frac12.
\end{align*}
Besides, by Lemma \ref{le:vp},
\begin{align*}
  \left\|\Uk  v_{k,\perp}^2 \right\|_{L^2_sL^2_y} 
  &\lesssim \alpha_k^{-1} \|\UU\|_{L^\infty} \|v_{k,\perp}\|_{L^4_sL^4_y}^2\\
  &\lesssim \alpha_k^{-1} \|v_{k,\perp}\|_{L^4_sL^\infty_y} \|v_{k,\perp}\|_{L^4_sL^2_y}\\
  &\lesssim \alpha_k^{-1} \|v_{k,\perp}\|_{L^4_sL^\infty_y} \|v_{k,\perp}\|_{L^\infty_sL^2_y}^\frac12
  \|v_{k,\perp}\|_{L^2_sL^2_y}^\frac12\lesssim \alpha_k^\frac12.
\end{align*}
Therefore, we have proved
\begin{equation*}
  \|f_{k,n_\star}^{\rm I}\|+\|g_{k,n_\star}^{\rm I}\|_{L^2} \lesssim \alpha_k^\frac12.
\end{equation*}

Second,
\begin{align*}
  \qqq_k^{\rm II} &= \frac 16 (v_{k,\star}+v_{k,\perp})^3
  =\frac 16 v_{k,\star}^3+  \frac 12 v_{k,\star}^2 v_{k,\perp} + \frac 12 v_{k,\star}^2 v_{k,\perp}
  +\frac 16 v_{k,\perp}^3\\
  & =\frac16 a_{k,n_\star}^3 \cos^3(n_\star s) + \frac 12 a_{k,n_\star}^2b_{k,n_\star} \cos^2(n_\star s) \sin(n_\star s)
    + \frac 12  a_{k,n_\star} b_{k,n_\star}^2 \cos(n_\star s) \sin^2(n_\star s)\\
  &\quad  + \frac 16 b_{k,n_\star}^3 \sin^3(n_\star s) 
   + \frac 12 v_{k,\star}^2 v_{k,\perp} + \frac 12 v_{k,\star}^2 v_{k,\perp}
  +\frac 16 v_{k,\perp}^3. 
\end{align*}
Thus, using 
\begin{equation}\label{eq:tg}
\begin{aligned}
  \cos^3(n_\star s) & = \frac 34 \cos(n_\star s) + \frac 14 \cos(3n_\star s) \\
  \cos^2(n_\star s) \sin(n_\star s) & = \frac 14 \sin(n_\star s) + \frac 14 \sin(3n_\star s) \\
  \cos(n_\star s) \sin^2(n_\star s) & = \frac 14 \cos(n_\star s) - \frac 14 \cos(3n_\star s) \\
  \sin^3(n_\star s) & = \frac 34 \sin(n_\star s) - \frac 14 \sin(3n_\star s) \\
\end{aligned}
\end{equation}
we obtain
\begin{align*}
  & \qqq_k^{\rm II}
  -\frac 18 a_{k,n_\star}(a_{k,n_\star}^2+b_{k,n_\star}^2) \cos(n_\star s)
  -\frac 18 b_{k,n_\star}(a_{k,n_\star}^2+b_{k,n_\star}^2) \sin(n_\star)\\
  & \quad =  \frac 1{24} a_{k,n_\star}  (a_{k,n_\star}^2-3 b_{k,n_\star}^2) \cos(3n_\star s) 
  + \frac 1{24} b_{k,n_\star}( 3 a_{k,n_\star}^2-b_{k,n_\star}^2) \sin(3n_\star s)\\
  & \qquad+ \frac 12 v_{k,\star}^2 v_{k,\perp} + \frac 12 v_{k,\star}^2 v_{k,\perp}
  +\frac 16 v_{k,\perp}^3.
\end{align*}
This implies that
\begin{align*}
  & \left\|f_{k,n_\star}^{\rm II}-\frac 18 a_{k,n_\star}(a_{k,n_\star}^2+b_{k,n_\star}^2)\right\|_{L^2}
  + \left\|g_{k,n_\star}^{\rm II}-\frac 18 b_{k,n_\star}(a_{k,n_\star}^2+b_{k,n_\star}^2)\right\|_{L^2}
  \\ & \quad \lesssim \|v_{k,\star}^2 v_{k,\perp}\|_{L^2_sL^2_y} 
  +\| v_{k,\star}^2 v_{k,\perp}\|_{L^2_sL^2_y} + \| v_{k,\perp}^3\|_{L^2_sL^2_y}.
\end{align*}
We estimate the terms on the right-hand side as follows.
By Lemmas \ref{le:bv} and \ref{le:vp},
\begin{align*}
  \|v_{k,\star}^2 v_{k,\perp}\|_{L^2_sL^2_y} &\lesssim 
  \|v_{k,\star}\|_{L^4_sL^\infty_y}^2 \|v_{k,\perp}\|_{L^\infty_sL^2_y} \lesssim \alpha_k,\\
  \|v_{k,\star} v_{k,\perp}^2\|_{L^2_sL^2_y} &\lesssim 
  \|v_{k,\star}\|_{L^4_sL^\infty_y} \|v_{k,\perp}\|_{L^4_sL^\infty_y} \|v_{k,\perp}\|_{L^\infty_sL^2_y} \lesssim \alpha_k^\frac32,\\
  \| v_{k,\perp}^3\|_{L^2_sL^2_y}&\lesssim 
  \|v_{k,\perp}\|_{L^4_sL^\infty_y}^2 \|v_{k,\perp}\|_{L^\infty_sL^2_y} \lesssim \alpha_k^2.
\end{align*}
Therefore,
\begin{equation*}
  \left\|f_{k,n_\star}^{\rm II}-\frac 18 a_{k,n_\star}(a_{k,n_\star}^2+b_{k,n_\star}^2)\right\|_{L^2}
  + \left\|g_{k,n_\star}^{\rm II}-\frac 18 b_{k,n_\star}(a_{k,n_\star}^2+b_{k,n_\star}^2)\right\|_{L^2}
  \lesssim\alpha_k.
\end{equation*}

Third, from \eqref{eq:pp}, it follows that
\begin{equation*}
  |\qqq_k^{\rm III}|\lesssim \alpha_k |v_k|^4,
\end{equation*}
and so by Lemma \ref{le:bv},
\begin{equation*}
  \|f_{k,n_\star}^{\rm III}\|_{L^2}+\|g_{k,n_\star}^{\rm III}\|_{L^2}
  \lesssim \|\qqq_k^{\rm III}\|_{L^2_sL^2_y}\lesssim 
  \alpha_k \|v_k\|_{L^8_sL^8_y}^4
  \lesssim \alpha_k \|v_k\|_{L^\infty_sL^2_y}\|v_k\|_{L^\infty_sL^\infty_y}\|v_k\|_{L^4_sL^\infty_y}^2
  \lesssim \alpha_k^\frac 12.
\end{equation*}
(We have used $\|v_k\|_{L^\infty_sL^\infty_y}^2\lesssim \|v_k\|_{L^\infty_sL^2_y}\|\partial_yv_k\|_{L^\infty_sL^2_y}
\lesssim \alpha_k^{-1}$.)
\end{proof}

\subsection{A compactness result}
We prove the following compactness result using techniques in \cite{BeCe,lions}.
\begin{proposition}\label{pr:dc}
Let $\lambda\in \R$.
If there exists a sequence $\{(a_k,b_k)\}_k$ of $H^2(\R)\times H^2(\R)$ functions 
which is bounded in $H^1$, such that
\begin{equation}\label{eq:ab}
\lim_{k\to+\infty} \left\{\Bigl\|a_k''-\lambda  a_k + \frac 18 a_k(a_k^2+b_k^2) \Bigr\|_{L^2} 
+ \Bigl\| b_k''-\lambda b_k + \frac 18 b_k(a_k^2+b_k^2) \Bigr\|_{L^2}\right\} 
= 0
\end{equation}
and
\[
\liminf_{k\to+\infty} \left(\|a_k\|_{L^\infty}+ \|b_k\|_{L^\infty}\right) \gtrsim 1,
\]
then $\lambda>0$. Moreover,  
there exists an integer $J\geq 1$ and for all $j\in \{1,\ldots, J\}$,
there exist $\theta_j\in [-1,1]$ and a sequence of
reals $\{r_{j,k}\}_k$,  such that
\begin{equation*}
\lim_{k\to+\infty} \left\{\|a_k -  W_k\|_{H^1}+\| b_k - Z_k\|_{H^1} \right\}=0
\end{equation*}
where
\begin{equation}\label{eq:Wk}
W_k(y)=\sum_{j=1}^J  \cos(\theta_j) \sqrt{\lambda} Q\bigl(\sqrt{\lambda} (y-r_{j,k})\bigr),
\quad
Z_k(y)=\sum_{j=1}^J  \sin(\theta_j)  \sqrt{\lambda} Q\bigl(\sqrt{\lambda} (y-r_{j,k})\bigr)
\end{equation}
and, if $J\geq 2$, for any $i,j\in \{1,\ldots,J\}$, $i\neq j$,
\begin{equation}\label{eq:rj}
\lim_{k\to +\infty} |r_{i,k}-r_{j,k}|=+\infty.
\end{equation}
\end{proposition}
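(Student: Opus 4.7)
The plan is to recast the coupled system \eqref{eq:ab} as a single complex-valued equation. Setting $u_k = a_k + \ii b_k$, the assumption \eqref{eq:ab} reads
\[
\left\| u_k'' - \lambda u_k + \tfrac{1}{8} |u_k|^2 u_k \right\|_{L^2} \to 0,
\]
while $\{u_k\}_k$ is bounded in $H^1$ and $\limsup_k \|u_k\|_{L^\infty} \gtrsim 1$. The desired decomposition \eqref{eq:Wk} then asserts that $u_k$ asymptotically splits into decoupled translated copies of the ground state $\sqrt{\lambda}\, Q(\sqrt{\lambda}\,\cdot)$, each multiplied by a constant phase $e^{\ii\theta_j}$. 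I would therefore proceed by a profile decomposition argument of concentration--compactness type, adapted to the present stationary setting.

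To extract the first profile, choose $y_k$ with $|u_k(y_k)| \gtrsim 1$ and set $\tilde u_k(y) = u_k(y+y_k)$. By $H^1$ boundedness and the compactness of $H^1 \hookrightarrow L^\infty_{\mathrm{loc}}$, some subsequence of $\tilde u_k$ converges weakly in $H^1$ and locally uniformly to some $u_\infty$ with $|u_\infty(0)| \gtrsim 1$. Passing to the limit in the approximate equation yields
\[
u_\infty'' - \lambda u_\infty + \tfrac{1}{8} |u_\infty|^2 u_\infty = 0, \qquad u_\infty \in H^1(\R).
\]
Two invariants classify $u_\infty$: first, the Wronskian $\mathrm{Im}(u_\infty \overline{u_\infty'})$ is constant and vanishes by $H^1$ decay, so $u_\infty = e^{\ii\theta_1} \psi_\infty$ for some real $\psi_\infty \in H^1(\R)$ solving $\psi_\infty'' - \lambda \psi_\infty + \tfrac{1}{8}\psi_\infty^3 = 0$; second, the Hamiltonian $(\psi_\infty')^2 - \lambda \psi_\infty^2 + \tfrac{1}{16}\psi_\infty^4$ is constant and vanishes at infinity, giving $(\psi_\infty')^2 = \lambda \psi_\infty^2 - \tfrac{1}{16}\psi_\infty^4$. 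For $\lambda \le 0$ this forces $\psi_\infty \equiv 0$, contradicting $|u_\infty(0)| \gtrsim 1$; hence $\lambda > 0$. For $\lambda > 0$, elementary ODE integration gives $\psi_\infty(y) = \pm \sqrt{\lambda}\, Q(\sqrt{\lambda}(y - s_1))$. Setting $r_{1,k} = y_k + s_1$ and absorbing the sign into $\theta_1$ yields the first profile.

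Next I iterate. Define
\[
u_k^{(1)}(y) = u_k(y) - e^{\ii\theta_1} \sqrt{\lambda}\, Q\bigl(\sqrt{\lambda}(y - r_{1,k})\bigr).
\]
After translation by $r_{1,k}$, $u_k^{(1)} \rightharpoonup 0$ in $H^1$. A key point is that $u_k^{(1)}$ still satisfies an approximate version of the same equation: expanding the cubic nonlinearity $|u_k|^2 u_k$, the contribution of the subtracted profile cancels against its linear terms exactly, while the mixed cubic terms, which carry the rapidly decaying weight $Q(\sqrt{\lambda}(\cdot - r_{1,k}))$, vanish in $L^2$ by combining this decay with the local strong convergence of the translated $u_k^{(1)}$ to $0$. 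If $\|u_k^{(1)}\|_{L^\infty} \to 0$, the interpolation $\|u_k^{(1)}\|_{L^4}^4 \lesssim \|u_k^{(1)}\|_{L^\infty}^2 \|u_k^{(1)}\|_{L^2}^2$ combined with the approximate energy identity obtained by testing the equation against $u_k^{(1)}$ upgrades this to $u_k^{(1)} \to 0$ strongly in $H^1$, and the algorithm stops with $J = 1$. Otherwise, one selects a new center $y_k^{(2)}$ with $|u_k^{(1)}(y_k^{(2)})| \gtrsim 1$ and repeats. The separation \eqref{eq:rj} holds a posteriori: were $|y_k^{(2)} - r_{1,k}|$ bounded, $u_k^{(1)}$ would tend to $0$ locally uniformly near $r_{1,k}$, contradicting $|u_k^{(1)}(y_k^{(2)})| \gtrsim 1$. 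The iteration terminates after finitely many steps by Pythagorean decoupling of $H^1$ norms under asymptotically orthogonal translations: each extracted profile contributes the fixed positive mass $\|\sqrt{\lambda}\, Q(\sqrt{\lambda}\,\cdot)\|_{H^1}^2$, while $\|u_k\|_{H^1}$ remains uniformly bounded.

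The main obstacle is the propagation step: proving that each residual $u_k^{(j)}$ satisfies the same approximate equation up to an $o_{L^2}(1)$ error. This requires careful bookkeeping of the cubic cross-terms, where the exponential decay of $Q$ and the weak-to-local-strong compactness in $H^1$ are combined to beat the slow spatial behavior of the residual. The classification of the scalar profile and the exclusion of $\lambda \le 0$ are then straightforward consequences of the Hamiltonian ODE identity above.
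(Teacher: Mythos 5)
Your proof is correct and follows the same overall concentration--compactness architecture as the paper, but several steps are executed by a different (and arguably cleaner) route, so the comparison is worth spelling out. The paper keeps the real system $(a_k,b_k)$, while you pass to $u_k=a_k+\ii b_k$; this is not merely cosmetic, since it lets you replace the paper's Lemma~\ref{le:mk} (which treats $A$ and $B$ as two $H^1$ solutions of a linear Schr\"odinger ODE with potential $V=\tfrac18(A^2+B^2)$ and compares with a fundamental system $D_1,D_2$) by the single observation that the ODE Wronskian $\mathrm{Im}(u_\infty\overline{u_\infty'})$ is constant and decays, forcing $u_\infty=e^{\ii\theta}\psi_\infty$ with $\psi_\infty$ real. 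Your exclusion of $\lambda\le 0$ uses the first integral of the scalar ODE, $(\psi_\infty')^2=\lambda\psi_\infty^2-\tfrac1{16}\psi_\infty^4$, whereas the paper multiplies the system by $\tanh(y)\,A_1'$ (resp.\ $\tanh(y)\,B_1'$) and integrates to get the weighted virial identity $\int\bigl[(A_1')^2+(B_1')^2-\lambda(A_1^2+B_1^2)+\tfrac1{16}(A_1^4+B_1^4)\bigr]\cosh^{-2}(y)\,dy=0$; both are Pohozaev-type arguments and equally elementary. Finally, for the iteration you use an $L^\infty$-based concentration criterion together with testing the approximate equation against $\overline{u_k^{(j)}}$ and the interpolation $\|u\|_{L^4}^4\lesssim\|u\|_{L^\infty}^2\|u\|_{L^2}^2$; the paper instead uses the Lions-type concentration function $\beta_k=\max_m(\int_{J_m}(a_{2,k}^4+b_{2,k}^4))^{1/4}$ from \cite{BeCe,lions} and the Gagliardo--Nirenberg bound $\int(a^2+b^2)^2\lesssim\beta_k^2\|(a,b)\|_{H^1}^2$. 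The two stopping criteria are equivalent for $H^1$-bounded sequences (boundedness of $\|u_k\|_{H^1}$ makes $\|u_k\|_{L^\infty}\to 0$ equivalent to $\beta_k\to 0$), so they yield the same finite $J$ by the same Pythagorean bookkeeping of the quadratic form $\int(a')^2+(b')^2+\lambda(a^2+b^2)$, each profile costing the fixed amount $\tfrac{\lambda^2}{8}\int Q^4$. You correctly identify the propagation of the approximate equation to the residual as the delicate point; in the paper this is the explicit algebraic reorganization of $F_k$ into terms of the form (localized profile)$\times$(locally vanishing residual), which is exactly what you describe, so your sketch is sound.
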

\begin{proof}
We start with the following uniqueness result.
\begin{lemma}\label{le:mk}
Let $\lambda>0$.
Let $(A,B)\in H^1(\R)\times H^1(\R)$ be a nonzero solution of the system
\begin{equation}\label{eq:AB}
  \begin{cases}
    A''- \lambda A + \frac 18 A (A^2+B^2)=0\\
    B''- \lambda B + \frac 18 B (A^2+B^2)=0
  \end{cases}
\end{equation}
Then, there exists $r\in\R$ and $\theta\in[0,2\pi)$ such that
\begin{align*}
  A(y)&=\cos(\theta) \sqrt{\lambda} Q(\sqrt{\lambda} ( y - r))\\
  B(y)&=\sin(\theta) \sqrt{\lambda} Q(\sqrt{\lambda} (y - r)).
\end{align*}
\end{lemma}
\begin{proof}[Proof of Lemma \ref{le:mk}]
First, we recall a general fact concerning the ODE
\begin{equation}\label{eq:DD}
  D''-\lambda D + V D=0
\end{equation}
with a potential $V(y)\in \mathcal{S}(\R)$.
Let $D_1$ be the solution of \eqref{eq:DD} with $D_1(0)=1$ and $D_1'(0)=0$,
and let $D_2$ be the solution of \eqref{eq:DD} with $D_1(0)=0$ and $D_2'(0)=1$.
Then there exists at most one value $\nu\in[-1,1]$ such that 
the solution $D=\nu D_1 + \sqrt{1-\nu^2} D_2$ belongs to $H^1(\R)$.
Indeed, assume for the sake of contradiction that two different $\nu_1$ and
$\nu_2$ in $[-1,1]$ provide an $H^1(\R)$ solution of \eqref{eq:DD}.
Then, $D_1$ and $D_2$ are both $H^1$ solution of \eqref{eq:DD}.
But this is a contradiction with the fact that the Wronskian
$W(y)=D_1(y)D_2'(y)-D_2(y)D_1'(y)$ is constant and equal to~$1$ on $\R$.

Second, we consider $(A,B)\in H^1(\R)\times H^1(\R)$ as in the statement of the lemma
and we set $V(y)=\frac18 (A^2+B^2)$.  Since $(A,B)$ is nontrivial, either $A$ or $B$ is nonzero.
We assume without loss of generality that $A$ is not zero.
By the observation above, there exists $\kappa\in\R$ such that $B=\kappa A$.

Third, we insert $B=\kappa A$ in the first line of the system and we get
\begin{equation*}
  A''- \lambda A + \frac{1+\kappa^2} 8 A^3.
\end{equation*}
By standard ODE tools, this implies that there exists $r\in \R$ such that
\begin{equation*}
A(y) = \nu \sqrt{\lambda} Q(\sqrt{\lambda}( y -r)),\quad
\nu = \frac{\pm 1}{\sqrt{1+\kappa^2}}.
\end{equation*}
Thus, setting $\sigma  = \SIGN(\kappa)$, we obtain
$B=\kappa A = \sigma \sqrt{1-\nu^2}\sqrt{\lambda}Q(\sqrt{\lambda} (y -r))$.
Lastly, we choose the unique $\theta\in [0,2\pi)$ such that
$\cos(\theta)=\nu$ and $\sin(\theta)=\sigma\sqrt{1-\nu^2}$.
\end{proof}

Now, we derive Proposition \ref{pr:dc} using Lemma \ref{le:mk} and following the proof of \cite[Lemma 3.1]{BeCe}.
Since we assume $\|a_k\|_{L^\infty}+\|b_k\| _{L^\infty} \gtrsim 1$,
there exists $c_1>0$ and 
for all $k\geq 1$, there exists
 $y_{1,k}\in \R$ be such that
\[
|a_k(y_{1,k})|+|b_k(y_{1,k})| \geq c_1.
\]
We set $a_{1,k}(y) = a_k(y+y_{1,k})$, $b_{1,k}(y) = b_k(y+y_{1,k})$.
Extracting a subsequence (but keeping the same notation), there exists $(A_1,B_1)\in H^1(\R)\times H^1(\R)$ such that
\begin{align*}
&  a_{1,k}\rightharpoonup A_1 \mbox{ weakly in $H^1(\R)$},
\quad  a_{1,k}\to A_1 \mbox{ a.e. on $\R$, as $k\to +\infty$,}\\
&  b_{1,k}\rightharpoonup B_1 \mbox{ weakly in $H^1(\R)$},
\quad  b_{1,k}\to B_1 \mbox{ a.e. on $\R$, as $k\to +\infty$.}
\end{align*}
By the $H^1$ bound, there exists $\delta>0$ such that for all $k$ sufficiently large and all $y\in [-\delta,\delta]$,
$| a_{1,k}(y)|+| b_{1,k}(y)|\geq \frac 12 c_1$.
Moreover, by the $H^1$ bound and Ascoli's theorem, $ a_{1,k}\to A_1$
and $ b_{1,k}\to B_1$ uniformly on $[-\delta,\delta]$
(possibly extracting a subsequence).
Therefore, $|A_1|+|B_1|\neq 0$.

Besides, $(A_1,B_1)$ satisfies the system \eqref{eq:AB}.
In particular, $A_1,B_1\in H^s(\R)$ for any $s\geq 0$.
Multiplying the first line of the system by $A_1' \tanh(y)$ and integrating by parts, 
multiplying the second line of the system by $A_2' \tanh(y)$ and integrating by parts,
then summing, we obtain
\[
\int \left((A_1')^2 +(B_1')^2- \lambda  (A_1^2+B_1^2)  
+ \frac 1{16} (A_1^4+B_1^4) \right) \frac {dy}{\cosh^2(y)} = 0
\]
which implies that $\lambda>0$.
Once this property is established, the rest of the proof closely follows 
the one in \cite[Lemma 3.1]{BeCe} using Lemma \ref{le:mk}.
We provide a proof for the reader convenience.

Since $(A_1,B_1)\neq (0,0)$ and satisfies the system \eqref{eq:AB}, by Lemma \ref{le:mk},
there exists $r_1^0\in \R$ and $\theta_1\in [0,2\pi)$ such that
\begin{align*}
  A_1(y)&=\cos(\theta_1) \sqrt{\lambda} Q(\sqrt{\lambda} (y - r_1^0))\\
  B_1(y)&=\sin(\theta_1) \sqrt{\lambda} Q(\sqrt{\lambda} (y - r_1^0)).
\end{align*}
Using \eqref{eq:ab} and integration by parts, we have
\begin{equation}\label{eq:ka}
  \lim_{k\to+\infty} \int \left[(a_k')^2+(b_k')^2 + \lambda a_k^2 + \lambda b_k^2 \right]
  - \frac 18 \int \bigl( a_k^2 + b_k^2 \bigr)^2 = 0.
\end{equation}
Using the equation of $Q$, it holds
\begin{equation*}
  \int (Q')^2 + Q^2 = \frac 18 \int Q^4
\end{equation*}
and thus (or using the system satisfied by $(A_1,B_1)$), it also holds
\begin{equation}\label{eq:o1}
  \int \left[(A_1')^2+(B_1')^2 + \lambda A_1^2 + \lambda B_1^2 \right]
  = \frac 18 \int (A_1^2 + B_1^2)^2 = \frac{\lambda^2}8 \int Q^4.
\end{equation}
Set
\[
 a_{2,k}(y)=a_{1,k}(y)-A_1(y),\quad b_{2,k}(y)=b_{1,k}(y)-B_1(y).
\]

If
\[
\lim_{k\to+\infty} \int (a_{2,k}')^2+(b_{2,k}')^2+\lambda a_{2,k}^2+\lambda b_{2,k}^2 =0
\]
then the proof is finished with $J=1$.
Otherwise, up to a subsequence, we assume
\[
\lim_{k\to+\infty} \int (a_{2,k}')^2+(b_{2,k}')^2+\lambda a_{2,k}^2+\lambda b_{2,k}^2 = c>0.
\]
We have
\begin{align*}
&  a_{2,k}\rightharpoonup 0 \mbox{ weakly in $H^1(\R)$},
\quad  a_{2,k}\to 0 \mbox{ a.e. on $\R$, as $k\to +\infty$,}\\
&  b_{2,k}\rightharpoonup 0 \mbox{ weakly in $H^1(\R)$},
\quad  b_{2,k}\to 0 \mbox{ a.e. on $\R$, as $k\to +\infty$.}
\end{align*}
Moreover, by weak convergence in $H^1$
\begin{equation}\label{eq:o2}
  \begin{aligned}
  \lim_{k\to+\infty} \int (a_{2,k}')^2+(b_{2,k}')^2+\lambda a_{2,k}^2+\lambda b_{2,k}^2 
  &=\lim_{k\to+\infty} \int (a_{k}')^2+(b_{k}')^2+\lambda a_{k}^2+\lambda b_{k}^2 \\
  &\quad - \int \left[(A_1')^2+(B_1')^2 + \lambda A_1^2 + \lambda B_1^2 \right].
\end{aligned}
\end{equation}
By a variant of the Brezis-Lieb lemma,  we also have
\begin{equation*}
  \lim_{k\to+\infty} \int (a_{2,k}^2 + b_{2,k}^2)^2 =
  \lim_{k\to+\infty} \int (a_{k}^2 + b_{k}^2)^2 -\int (A_1^2+B_1^2)^2.
\end{equation*}
Therefore, using \eqref{eq:ka} and \eqref{eq:o1}, we obtain
\begin{equation}\label{eq:I2}
  \lim_{k\to+\infty} \int \left[(a_{2,k}')^2+(b_{2,k}')^2 + \lambda a_{2,k}^2 + \lambda b_{2,k}^2 \right]
  =\lim_{k\to+\infty} \frac 18 \int \bigl( a_{2,k}^2 + b_{2,k}^2 \bigr)^2 = c >0.
\end{equation}
Set $J_m=[m,m+1]$ where $m\in \Z$ and 
\begin{equation*}
  \beta_k=\max_{m\in \Z} \left(\int_{J_m} (a_{2,k}^4+b_{2,k}^4)\right)^\frac14.
\end{equation*}
We estimate, using the Gagliardo-Nirenberg inequality on the interval $[0,1]$,
\begin{align*}
  \int (a_{2,k}^2+b_{2,k}^2)^2 
  &\lesssim \sum_{m\in\Z} \int_{J_m} (a_{2,k}^4+b_{2,k}^4)\\
  &\lesssim \beta_k^2 \sum_{m\in\Z} \left(\int_{J_m} (a_{2,k}^4+b_{2,k}^4)\right)^\frac 12\\
  &\lesssim \beta_k^2  \int(a_{2,k}')^2+(b_{2,k}')^2+\lambda a_{2,k}^2+\lambda b_{2,k}^2 .
\end{align*}
Thus, by \eqref{eq:I2}, possibly extracting a further subsequence, we obtain
$\lim_{k\to+\infty} \beta_k >0$.
For $k$ large, fix $m_k$ such that
\begin{equation*}
  \beta_k= \biggl(\int_{J_{m_k}} (a_{2,k}^4+b_{2,k}^4)\biggr)^\frac14.
\end{equation*}
We claim that $\lim_{k\to+\infty} |m_k|=+\infty$.
Indeed, otherwise, up to the extraction of a subsequence $m_k=m_0$ for all $k$
and $\lim_{k\to+\infty} \int_{J_{m_0}} (a_{2,k}^4+b_{2,k}^4)>0$.
But this is in contradiction with the weak convergence to $0$ in $H^1\times H^1$
of the sequence $(a_{2,k},b_{2,k})_k$.

Extracting a subsequence, there exists $(A_2,B_2)\in H^1(\R)\times H^1(\R)$ such that
\begin{align*}
&  a_{2,k}(\cdot+m_k)\rightharpoonup A_2 \mbox{ weakly in $H^1(\R)$},
\quad  a_{2,k}(\cdot+m_k)\to A_2 \mbox{ a.e. on $\R$, as $k\to +\infty$,}\\
&  b_{2,k}(\cdot+m_k)\rightharpoonup B_2 \mbox{ weakly in $H^1(\R)$},
\quad  b_{2,k}(\cdot+m_k)\to B_2 \mbox{ a.e. on $\R$, as $k\to +\infty$.}
\end{align*}
By the above estimates, we know that $(A_2,B_2)\neq (0,0)$.

Now, we look at the limiting system for $(A_2,B_2)$.
We have
\begin{align*}
  a_{2,k}''-\lambda  a_{2,k} + \frac 18 a_{2,k}(a_{2,k}^2+b_{2,k}^2)
  & = \left( a_{1,k}''-\lambda  a_{1,k} + \frac 18 a_{1,k}(a_{1,k}^2+b_{1,k}^2) \right)\\
  &  \quad - \left( A_1''-\lambda A_1 + \frac 18 A_1 (A_1^2+B_1^2) \right) +\frac 18 F_k
\end{align*}
where 
\begin{align*}
  F_k & = (a_{1,k}-A_1) \left((a_{1,k}-A_1)^2 + (b_{1,k}-B_1)^2\right) 
  - a_{1,k}(a_{1,k}^2+b_{1,k}^2) + A_1 (A_1^2+B_1^2)\\
  & = 3a_{1,k} A_1(A_1-a_{1,k}) + a_{1,k} B_1 (B_1-b_{1,k})
  + b_{1,k}B_1 (A_1-a_{1,k}) +A_1b_{1,k} (B_1-b_{1,k}) \\
  & = (3a_{1,k} A_1+ b_{1,k}B_1 ) a_{2,k} + (a_{1,k} B_1+A_1b_{1,k}) b_{2,k}.
\end{align*}
By the last expression of $F_k$, it is easy to see that $\lim_{k\to+\infty}\|F_k\|_{L^2}=0$.
Using \eqref{eq:ab} and the equation
$A_1''-\lambda A_1 + \frac 18 A_1 (A_1^2+B_1^2)=0$, we deduce that
\begin{equation*}
  \lim_{k\to+\infty} \Bigl\|a_{2,k}''-\lambda  a_{2,k} + \frac 18 a_{2,k}(a_{2,k}^2+b_{2,k}^2)\Bigr\|_{L^2}=0.
\end{equation*}
Similarly, one proves that
\begin{equation*}
  \lim_{k\to+\infty} \Bigl\|b_{2,k}''-\lambda  b_{2,k} + \frac 18 b_{2,k}(a_{2,k}^2+b_{2,k}^2)\Bigr\|_{L^2}=0.
\end{equation*}
As before, this implies that $(A_2,B_2)\neq (0,0)$ satisfies the system \eqref{eq:AB}.
Thus, by Lemma \ref{le:mk}, there exists $r_2^0\in \R$ and $\theta_2\in [0,2\pi)$ such that
\begin{align*}
  A_2(y)&=\cos(\theta_2) \sqrt{\lambda} Q(\sqrt{\lambda}( y - r_2^0))\\
  B_2(y)&=\sin(\theta_2) \sqrt{\lambda} Q(\sqrt{\lambda}( y - r_2^0)).
\end{align*}

Now, we set
\[
 a_{3,k}(y)=a_{2,k}(y)-A_2(y),\quad b_{3,k}(y)=b_{2,k}(y)-B_2(y)
\]
and we iterate the argument.
For a certain $J\geq 3$, one finds that 
$(a_{J,k},b_{J,k})$ converges strongly to $0$ in $H^1\times H^1$.
Indeed, as shown by \eqref{eq:o1} and \eqref{eq:o2}, 
measuring $(a_{j,k},b_{j,k})$ in the norm
\[
\left(\int \left[(A')^2+(B')^2+\lambda A^2 +\lambda B^2\right]\right)^\frac12,
\]
each step passing
from $(a_{j-1,k},b_{j-1,k})$ to $(a_{j,k},b_{j,k})$ removes a fixed amount $\frac{\lambda^2}{8}\int Q^4$ of this norm
and since the initial sequence $(a_{1,k},b_{1,k})$ has a finite norm, this can only be
done a finite number of times.
\end{proof}

\subsection{Convergence of the dominant mode}
In this subsection, we complete the proof of \ref{th:i} and \ref{th:ii} of Theorem \ref{thm:1},
using Proposition \ref{pr:dc}.

From the system \eqref{eq:vs} of $(a_{k,n_\star},b_{k,n_\star})$, Lemma \ref{le:6} and the convergence $\lim_{k\to+\infty} \lambda_{k,n_*} = \lambda$, we have
\begin{equation}\label{eq:av}
  \begin{aligned}
  \lim_{k\to+\infty} \left\|a_{k,n_\star}''-\lambda  a_{k,n_\star} + \frac 18 a_{k,n_\star}
  (a_{k,n_\star}^2+b_{k,n_\star}^2) \right\|_{L^2} &= 0,\\
  \lim_{k\to+\infty} \left\|b_{k,n_\star}''-\lambda  b_{k,n_\star} + \frac 18 b_{k,n_\star}
  (a_{k,n_\star}^2+b_{k,n_\star}^2) \right\|_{L^2} &= 0.
  \end{aligned}
\end{equation}
Moreover, by Lemmas \ref{le:bv} and \ref{le:vp}, we have
\begin{equation*}
  \|a_{k,n_\star}\|_{H^1}+\|b_{k,n_\star}\|_{H^1} \lesssim 1 \quad\mbox{and}\quad 
  \|a_{k,n_\star}\|_{L^\infty}+\|b_{k,n_\star}\|_{L^\infty} \gtrsim 1.
\end{equation*}
Applying Proposition \ref{pr:dc} to the sequence of dominant modes $\{(a_{k,n_\star},b_{k,n_\star})\}_k$,
we deduce that
$\lambda>0$ and that there exists
an integer $J\geq 1$ and for all $j\in \{1,\ldots, J\}$,
there exist $\theta_j\in [0,2\pi)]$ and a sequence of
reals $\{r_{j,k}\}_k$ such that
\begin{equation}\label{eq:zz}
\lim_{k\to+\infty} \left\{\|a_{k,n_\star} -  W_k\|_{H^1}+\| b_{k,n_\star} - Z_k\|_{H^1} \right\}=0
\end{equation}
where $\{W_k\}_k$, $\{Z_k\}_k$ and $\{r_{j,k}\}_k$ are as in \eqref{eq:Wk} and \eqref{eq:rj}.

We introduce the notation 
\[
\omega_k = \frac{T_\star}{T_k}=\frac{2\pi n_\star}{T_k}.
\]
By \eqref{eq:Tl} and $\lambda>0$, $\lim_{k\to +\infty} \omega_k=1^-$
and
\[
\lim_{k\to+\infty} \frac{1-\omega_k^2}{\alpha_k^2} = \lambda.
\]
In particular
\[
1-\omega_k = \frac{\lambda}{1+\omega_k}  \alpha_k^2+ o(\alpha_k^2)
\quad \mbox{and so}\quad
\frac{T_k}{2\pi n_\star} = \frac1{\omega_k}
=1+\frac{\lambda }2\alpha_k^2+o(\alpha_k^2).
\]
Thus, \ref{th:i} of Theorem \ref{thm:1} is proved.

To prove \ref{th:ii} of Theorem \ref{thm:1}, we go back to the original variables
\[
\phi_k(t,x)=\alpha_k v_k\left(\frac{2\pi}{T_k} t,\alpha_k x\right)
\]
where from Lemma \ref{le:vp}, we have the decomposition
$v_k = v_{k,\star}+v_{k,\perp}$ with
\[
v_{k,\star}(s,y)=a_{k,n_\star}(y) \cos(n_\star s) + b_{k,n_\star}(y) \sin(n_\star s) ,\quad
\|v_{k,\perp}\|_{L^\infty_sL^2_y} \lesssim \alpha_k.
\]
We observe that
\begin{align*}
& \phi_k(t,x)-\alpha_k [W_k(\alpha_k x) \cos(\omega_k t)+ Z_k(\alpha_k x) \sin(\omega_k t)]\\
&\quad = \alpha_k \left[ (a_{k,n_\star}-W_k)(\alpha_k x) \cos(\omega_k t)
+(b_{k,n_\star}-Z_k)(\alpha_k x) \sin(\omega_k t)
+ v_{k,\perp}\left(\frac {2\pi}{T_k} t,\alpha_k x\right)\right]
\end{align*}
and thus, for all $t\in [0,T]$,
\begin{align*}
&\alpha_k^{-1}\int |\phi_k(t,x)-\alpha_k [W_k(\alpha_k x) \cos(\omega_k t)+Z_k(\alpha_k x) \sin(\omega_k t)]|^2 \ud x \\
&\quad \lesssim \|a_{k,n_\star}-W_k\|_{L^2}^2
+\|b_{k,n_\star}-Z_k\|_{L^2}^2 + \|v_{k,\perp}\|_{L^\infty_sL^2_y}^2.
\end{align*}
Using \eqref{eq:zz} and $\|v_{k,\perp}\|_{L^\infty_sL^2_y} \lesssim \alpha_k$, this implies that
\[
\lim_{k\to+\infty} \alpha_k^{-1} \sup_{t\in\R}
\int \left|\phi_k(t,x)-\alpha_k [W_k(\alpha_k x) \cos(\omega_k t)+ Z_k(\alpha_k x) \sin(\omega_k t)]\right|^2 \ud x =0.
\]
Using $\cos(\theta_j)\cos(\omega_k t) + \sin(\theta_k)\sin(\omega_k t)
=\cos(\omega_k t - \theta_j)$, we have
\[
W_k(\alpha_k x) \cos(\omega_k t)+Z_k(\alpha_k x) \sin(\omega_k t)
= \sum_{j=1}^J \sqrt{\lambda} Q(\sqrt{\lambda} (\alpha_k x-r_{j,k}) ) \cos(\omega_k t - \theta_j).
\]
Defining
\[
\varepsilon_k = \sqrt{1-\omega_k^2} >0
\]
so that $\lim_{k\to+\infty} \frac{\varepsilon_k}{\alpha_k} = \sqrt{\lambda}$,
we have proved \ref{th:ii} of Theorem \ref{thm:1}.

\begin{remark}
  Using $\lim_{k\to+\infty} \|\partial_s v_{k,\perp}\|_{L^2_sL^2_y} = 0$
  from Lemma \ref{le:vp}, we also deduce that
\[
\lim_{k\to+\infty} \varepsilon_k^{-1} \int_0^{2\pi}
\int \biggl|\partial_t \phi_k(t,x)+ \varepsilon_k\omega_k \sum_{j=1}^J  Q( \varepsilon_k x-r_{j,k}) 
\sin(\omega_k t - \theta_j)\biggr|^2 \ud x \ud t=0.
\]
\end{remark}

\subsection{The Fermi golden rule}
In this subsection, we prove that if for a certain $j\in \{1,\ldots,J\}$
the sequence $\{r_{j,k}\}_k$ is bounded then $\widehat \UU(\sqrt{3})=0$.
This will imply \ref{th:iii} of Theorem \ref{thm:1}.

Without loss of generality, we assume that the sequence $\{r_{1,k}\}_k$ is bounded.
Thus, up to the extraction of a subsequence,
there exists $r_1=\lim_{k\to\infty} r_{1,k}$.
We have to examine the mode $2n_\star$.
From \eqref{eq:vn}, we have
\begin{equation}\label{eq:v2}
  \begin{aligned}
  a_{k,2n_\star}'' - \lambda_{k,2n_\star} a_{k,2n\star} + f_{k,2n_\star} = 0\\
  b_{k,2n_\star}'' - \lambda_{k,2n_\star} b_{k,2n\star} + g_{k,2n_\star} = 0
  \end{aligned}
\end{equation}
Recall that $\lambda_{k,2n_\star}<0$ and $\lambda_{k,2n_\star} \sim -3/\alpha_k^2$
as $k\to+\infty$.
Since $a_{k,2n_\star}, b_{k,2n_\star}\in H^1$, we have
\begin{equation}\label{eq:cc}
\int  \cos(\sqrt{-\lambda_{k,2n_\star}} y) f_{k,2n_\star}(y) \ud y=
\int  \cos(\sqrt{-\lambda_{k,2n_\star}} y) g_{k,2n_\star}(y) \ud y=0.
\end{equation}
We use the same notation as in the proof of Lemma \ref{le:6}.
Recall from \eqref{eq:fk} that
\begin{equation}\label{eq:fk3}
  \qqq_k =\Uk  v_k^2
  +\frac16 v_k^3 + \frac1{\alpha_k^3}\pp(\alpha_k v_k) =q_{k}^{\rm I}+q_{k}^{\rm II}+q_{k}^{\rm III}.
\end{equation}
First, expanding $v_k = v_{k,\star}+v_{k,\perp}$, one has
\begin{equation*}
  \qqq_k^{\rm I} =\Uk  \left(v_{k,\star}^2+2v_{k,\star}v_{k,\perp}
  +v_{k,\perp}^2\right).
\end{equation*}
Recall
\begin{equation*}
  v_{k,\star}^2(s) =\frac 12 (a_{k,n_\star}^2+b_{k,n_\star}^2)
  + \frac 12 (a_{k,n_\star}^2-b_{k,n_\star}^2) \cos(2 n_*s)
  + a_{k,n_\star} b_{k,n_\star}\sin (2 n_\star s).
\end{equation*}
This implies that
\begin{equation*}
   \left\|f_{k,2n_\star}^{\rm I}-\frac 12\Uk 
   (a_{k,n_\star}^2-b_{k,n_\star}^2)\right\|_{L^1} 
    \lesssim \left\|\Uk  v_{k,\star} v_{k,\perp} \right\|_{L^1_sL^1_y}
  +\left\|\Uk  v_{k,\perp}^2 \right\|_{L^1_sL^1_y}.
\end{equation*}
We have 
\begin{equation*}
  \left\|\Uk  v_{k,\star} v_{k,\perp} \right\|_{L^1_sL^1_y}
  \lesssim   \|\UU\|_{L^1} \|v_{k,\perp}\|_{L^1_sL^\infty_y}\|v_{k,\star}\|_{L^\infty_sL^\infty_y}.
\end{equation*}
Using Lemma \ref{le:bv}, one has
$\|v_{k,\star}\|_{L^\infty_sL^\infty_y}\lesssim \|a_{k,n_\star}\|_{L^\infty}
+\|b_{k,n_\star}\|_{L^\infty}\lesssim \|v_k\|_{L^1_sL^\infty_y}\lesssim \|v_k\|_{L^2_sL^\infty_y}\lesssim1$,
and using Lemma \ref{le:vp}, one has $\|v_{k,\perp}\|_{L^1_sL^\infty_y}
\lesssim \|v_{k,\perp}\|_{L^4_sL^\infty_y}\lesssim\alpha_k^\frac12$.
Thus,
\begin{equation*}
  \left\|\Uk  v_{k,\star} v_{k,\perp} \right\|_{L^1_sL^1_y}
  \lesssim  \alpha_k^\frac12.
\end{equation*}
Moreover, by Lemma \ref{le:vp},
\begin{equation*}
  \left\|\Uk  v_{k,\perp}^2 \right\|_{L^1_sL^1_y}
  \lesssim \alpha_k^{-1}\|\UU\|_{L^\infty}  \|v_{k,\perp}\|_{L^2_sL^2_y}^2 \lesssim \alpha_k.
\end{equation*}
Therefore,
\begin{equation*}
  \left\|f_{k,2n_\star}^{\rm I}-\frac 12\Uk  
  (a_{k,n_\star}^2-b_{k,n_\star}^2)\right\|_{L^1}\lesssim \alpha_k^\frac12.
\end{equation*}
Similarly, we prove that
\begin{equation*}
  \left\|g_{k,2n_\star}^{\rm I}- \Uk  
  a_{k,n_\star}b_{k,n_\star}\right\|_{L^1}\lesssim \alpha_k^\frac12.
\end{equation*}

Second,
\begin{align*}
\qqq_k^{\rm II} 
&= \frac 16 (v_{k,\star}+v_{k,\perp})^3\\
&=\frac16a_{k,n_\star}^3 \cos^3(n_\star s) 
+\frac12a_{k,n_\star}^2b_{k,n_\star} \cos^2(n_\star s)\sin(n_\star s)
+\frac12a_{k,n_\star}b_{k,n_\star}^2 \cos(n_\star s)\sin^2(n_\star s)\\
&\quad
+\frac16b_{k,n_\star}^3 \sin^3(n_\star s)
 + \frac 12 v_{k,\star}^2 v_{k,\perp} + \frac 12 v_{k,\star}^2 v_{k,\perp}+\frac 16 v_{k,\perp}^3.
\end{align*}
Thus, using \eqref{eq:tg},
\begin{align*}
  \qqq_k^{\rm II} &=\frac 18 a_{k,n_\star}(a_{k,n_\star}^2+b_{k,n_\star}^2) \cos(n_\star s) 
  +\frac 1{24} a_{k,n_\star}(a_{k,n}^2-3 b_{k,n_\star}^2)\cos(3n_\star s) \\
&\quad  +\frac18b_{k,n_\star} (a_{k,n_\star}^2+b_{k,n_\star}^2) \sin(n_\star s)
  +\frac1{24}b_{k,n_\star}(3a_{k,n_\star}^2-b_{k,n_\star}^2) \sin(3n_\star s)\\
  &\quad + \frac 12 v_{k,\star}^2 v_{k,\perp} + \frac 12 v_{k,\star}^2 v_{k,\perp}
  +\frac 16 v_{k,\perp}^3.
\end{align*}
This implies that
\begin{equation*}
  \left\|f_{k,2n_\star}^{\rm II}\right\|_{L^1}+\left\|g_{k,2n_\star}^{\rm II}\right\|_{L^1}\lesssim \|v_{k,\star}^2 v_{k,\perp}\|_{L^1_sL^1_y} 
  +\| v_{k,\star}^2 v_{k,\perp}\|_{L^1_sL^1_y} + \| v_{k,\perp}^3\|_{L^1_sL^1_y}.
\end{equation*}
We estimate the terms on the right-hand side as follows.
By Lemmas \ref{le:bv} and \ref{le:vp},
\begin{align*}
  \|v_{k,\star}^2 v_{k,\perp}\|_{L^1_sL^1_y} &\lesssim 
  \|v_{k,\star}\|_{L^2_sL^\infty_y}\|v_{k,\star}\|_{L^\infty_sL^2_y}\|v_{k,\perp}\|_{L^\infty_sL^2_y} \lesssim \alpha_k,\\
  \|v_{k,\star} v_{k,\perp}^2\|_{L^1_sL^1_y} & \lesssim 
  \|v_{k,\star}\|_{L^2_sL^\infty_y}  \|v_{k,\perp}\|_{L^\infty_sL^2_y}^2 \lesssim \alpha_k^2,\\
  \| v_{k,\perp}^3\|_{L^1_sL^1_y}& \lesssim 
  \|v_{k,\perp}\|_{L^2_sL^\infty_y} \|v_{k,\perp}\|_{L^\infty_sL^2_y}^2 \lesssim \alpha_k^2.
\end{align*}
Therefore,
\begin{equation*}
  \left\|f_{k,2 n_\star}^{\rm II}\right\|_{L^1}+\left\|g_{k,2 n_\star}^{\rm II}\right\|_{L^1}\lesssim\alpha_k.
\end{equation*}

Third, from \eqref{eq:pp}, it follows that
\begin{equation*}
  |\qqq_k^{\rm III}|\lesssim \alpha_k  |v_k|^4,
\end{equation*}
and so by Lemma \ref{le:bv},
\begin{equation*}
  \|f_{k,n_\star}^{\rm III}\|_{L^1}+\|g_{k,n_\star}^{\rm III}\|_{L^1}\lesssim \|\qqq_k^{\rm III}\|_{L^1_sL^1_y}
  \lesssim \alpha_k \|v_k\|_{L^4_sL^4_y}^4
  \lesssim \alpha_k \|v_k\|_{L^\infty_sL^2_y}^2 \|v_k\|_{L^2_sL^\infty_y}^2
  \lesssim \alpha_k.
\end{equation*}
In conclusion of the estimates above, we have
\begin{equation*}
  \left\|f_{k,2n_\star} -\frac 12\Uk  
  (a_{k,n_\star}^2-b_{k,n_\star}^2)\right\|_{L^1}
  +\left\|g_{k,2n_\star} - \Uk  
  a_{k,n_\star}b_{k,n_\star}\right\|_{L^1}
  \lesssim \alpha_k^\frac12.
\end{equation*}

Combining the above estimates with \eqref{eq:cc}, we find
\begin{equation}\label{eq:ll}
  \left|  \int  \cos(\sqrt{-\lambda_{k,2n_\star}} y)\Uk
  (a_{k,n_\star}^2-b_{k,n_\star}^2) \right|+
  \left| \int  \cos(\sqrt{-\lambda_{k,2n_\star}} y)\Uk
   a_{k,n_\star} b_{k,n_\star} \right| \lesssim \alpha_k^\frac12.
\end{equation}
Changing variable, we have
\begin{align*}
&  \int\cos(\sqrt{-\lambda_{k,2n_\star}} y)
\Uk(y) (a_{k,n_\star}^2-b_{k,n_\star}^2)(y) \ud y\\
&\quad = \int \cos(\alpha_k \sqrt{-\lambda_{k,2n_\star}} x) \UU(x) (a_{k,n_\star}^2
-b_{k,n_\star}^2)(\alpha_k x) \ud x .
\end{align*}
Now, using \eqref{eq:zz}, $\lim_{k\to +\infty} \alpha_k^2 \lambda_{k,2n_\star}= - 3$
and \eqref{eq:ll}, we have
\begin{align*}
  &\lim_{k\to+\infty}
  \int \cos(\alpha_k \sqrt{-\lambda_{k,2n_\star}} x) \UU(x) (a_{k,n_\star}^2
  -b_{k,n_\star}^2)(\alpha_k x)) dx\\
  &\quad=  \cos(2\theta_1)\lambda Q^2(\sqrt{\lambda} r_1)\int \cos(\sqrt{3} x) \UU(x) \ud x=0
\end{align*}
and similarly,
\begin{align*}
  &\lim_{k\to+\infty}
  2 \int \cos(\alpha_k \sqrt{-\lambda_{k,2n_\star}} x) \UU(x) (a_{k,n_\star} 
  b_{k,n_\star}) (\alpha_k x)) dx\\
  &\quad= \sin(2\theta_1)\lambda Q^2(\sqrt{\lambda} r_1)\int \cos(\sqrt{3} x) \UU(x) \ud x=0.
\end{align*}
When passing to the limit as $k\to+\infty$ above, we have
used that $U\in \mathcal{S}(\R)$ and if $J\geq 2$ that $\lim_{k\to +\infty} |r_{j,k}|=+\infty$
for $j\in \{2,\ldots,J\}$.

Therefore, independently of the value of $\theta_1$, 
we have obtained $\int \cos(\sqrt{3} x) \UU(x) \ud x=0$.
One proves similarly that $\int \sin(\sqrt{3} x) \UU(x) \ud x=0$.

\end{document}